\newtheorem{theorem}{Theorem}[section]
\newtheorem{lemma}[theorem]{Lemma}
\newtheorem{corollary}[theorem]{Corollary}
\theoremstyle{definition}
\newtheorem{definition}[theorem]{Definition}
\newtheorem{example}[theorem]{Example}
\newtheorem{remark}[theorem]{Remark}
\newtheorem*{acknowledgement}{Acknowledgement}
\newcommand*\circled[1]{\tikz[baseline=(char.base)]{
            \node[shape=circle,draw,inner sep=2pt] (char) {{#1}};}}
\newlength{\cellsize}
\newcommand\tableau[1]{
\vcenter{
\let\\=\cr
\baselineskip=-16000pt
\lineskiplimit=16000pt
\lineskip=0pt
\halign{&\tableaucell{##}\cr#1\crcr}}}
\newcommand{\tableaucell}[1]{{%
\def \arg{#1}\def \void{}%
\ifx \void \arg
\vbox to \cellsize{\vfil \hrule width \cellsize height 0pt}%
\else
\unitlength=\cellsize
\begin{picture}(1,1)
\put(0,0){\makebox(1,1)[c]{$#1$}}
\put(0,0){\line(1,0){1}}
\put(0,1){\line(1,0){1}}
\put(0,0){\line(0,1){1}}
\put(1,0){\line(0,1){1}}
\end{picture}%
\fi}}
\newcommand{\qs}{\tilde{s}}
\newcommand{\qpartial}{\tilde{\partial}}
\newcommand{\qpi}{\tilde{\pi}}
\newcommand{\qk}{\tilde{\kappa}}
\newcommand{\qbarpi}{\tilde{\theta}}
\newcommand{\key}{\kappa}
\newcommand{\qatom}{\tilde{\mathcal{A}}}
\newcommand{\wdash}{\vDash_w}
\DeclareMathOperator{\csum}{cumsum}
\DeclareMathOperator{\strong}{flat}
\DeclareMathOperator{\sort}{sort}
\DeclareMathOperator{\wt}{wt}
\DeclareMathOperator{\qshift}{fshift}
\newcommand{\qslide}{\mathfrak{F}}
\DeclareMathOperator{\SSet}{SS}
\DeclareMathOperator{\qSSet}{FSS}
\DeclareMathOperator{\Des}{Des}
\newcommand{\doml}{\trianglelefteq}
\newcommand{\domr}{\trianglerighteq}
\newcommand{\atom}{\mathcal{A}}
\newcommand{\HSSF}{\text{HSSF}}
\newcommand{\qamp}[2]{\text{QAMP}_{#1,#2}}
\newcommand{\sgn}[1]{\operatorname{sign}(#1)}
\newcommand{\sym}[1]{\text{Sym}_{#1}}
\newcommand{\qsym}[1]{\text{QSym}_{#1}}
\newcommand{\nonsym}[1]{\mathbb{C}[x_1,x_2,\cdots,x_{#1}]}
\newcommand{\Sn}[1]{S_{#1}}
\newcommand{\fparta}{\mathcal{L}_\alpha}
\title{Quasisymmetric divided difference operators and polynomial bases}
\author{Angela Hicks and Elizabeth Niese }
\date{\today}
\begin{document}

\begin{abstract}
    The key polynomials, the Demazure atoms, the Schubert polynomials, and even the Schur functions can be defined using divided difference operator.  In 2000, Hivert introduced a quasisymmetric analog of the divided difference operator.  In particular, replacing it in a natural way in the definition of the Schur functions gives Gessel's fundamental basis.  This paper is our attempt to apply the same methods to define the remaining bases and study the results.  In particular, we show both the key polynomials and Demazure atoms have natural analogs using Hivert's operator and that the resulting bases occur independently and defined by other means in the work of Assaf and Searles, as the fundemental slide polynomials and the fundamental particle basis respectively.  We further explore properties of these two bases, including giving the structure constants for the fundamental particle basis.
\end{abstract}
\maketitle
Several well studied families of polynomial bases for $\mathbb{C}[x_1,x_2,\cdots,x_n]$ are naturally defined using divided difference operators  $$\partial_i:=\frac{1-s_i}{x_i-x_{i-1}}$$ including the key polynomials, the Demazure atoms, and the Schubert polynomials.  Product rules for these polynomials are of particular interest. A classical result in algebraic geometry gives that the product of Schubert polynomials is a positive sum of Schubert polynomials, but despite a combinatorial model for the polynomials, it is a well studied and unsolved problem to give a combinatorial proof of this fact or a combinatorial formula for the structure constants. While the product of key polynomials is not positive in the key polynomials, it is conjectured that the product of key polynomials is atom positive and the product of atoms is atom positive in many known special cases.

These families of polynomials are closely related to other well studied bases: the Demazure atoms and the key polynomials can both be found from a well studied basis for $\mathbb{C}(q,t)[x_1,x_2,\cdots,x_n]$: the nonsymmetric Macdonald polynomials.  By specializing $q$ and $t$ in the nonsymmetric Macdonald polynomials, it was shown by Mason in \cite{mason2009explicit} and Ion in \cite{ion2003nonsymmetric} (respectively) that one recovers the Demazure atoms and key polynomials.  This observation leads to combinatorial models for each of the bases. 

Moreover, divided difference operators can be used to create bases for other vector spaces: for example, a basis for the symmetric functions on $n$ variables, the Schur polynomials, is a special case of both a Schubert polynomial and a key polynomial. Thus Schur polynomials can also be defined using the same divided difference operators. Here, of course, products are well understood, and the product of two Schur functions is a sum of Schur functions, with the well known Littlewood-Richardson rule giving the coefficients.

Hivert~\cite{Hiv2000} gave a quasisymmetric analog of the divided difference operator, $\qpartial_i$ and showed that replacing $\partial_i$ with $\qpartial_i$ in a natural way in the divided difference definition of the Schur functions generated Gessel's fundamental basis for the quasisymmetric functions ($\qsym{n}$).  

In this work, our goal is to study bases resulting from replacing $\partial_i$ with $\qpartial_i$ in a natural way in the divided difference definitions of the key polynomials, the Demazure atoms, and the Schubert polynomials.  In two of the three cases (the key polynomials and the Demazure atoms), the result is a basis for $\nonsym{n}$.  After defining the two, we were surprised to find they appear elsewhere in the literature as the fundamental slide polynomial in Assaf and Searles~\cite{AS2017} and the fundamental particle basis in Searles~\cite{Sea2020} respectively, defined by other means and without a clear connection in the literature to their classical analogs. In the third case (the Schubert polynomials) for reasons we will briefly explain, there appears to be no direct analogue.  The fundamental slide polynomials in particular have found applications in the study of several other important families of polynomials, including the Grothendieck polynomials in \cite{MR2377012}, the nonsymmetric Macdonald polynomials in \cite{MR3864395}, and the Schubert polynomials in \cite{assaf2017combinatorial}. 

By connecting the work of Hivert to the work of Assaf and Searles, we're able to show  their fundamental slide polynomials have a representation theoretic interpretation analogous to that of the key polynomials (otherwise known as Demazure characters in this context).  We also are able to give a number of new product formulas, including giving explicit structure constants for the fundamental particle basis and a refinement of the Littlewood-Richardson formula for the fundamental particle basis given by Searles in \cite{Sea2020}.
\section{Background}

There are three vector spaces on $n$ variables relevant to this work: the symmetric functions ($\sym{n}$), the quasisymmetric functions($\qsym{n}$), and the ``nonsymmetric'' polynomials ($\nonsym{n}$).  
As vector spaces, we have $\sym{n}\subset\qsym{n}\subset \nonsym{n}$.  Their dimensions are equinumerous with partitions of $n$, strong compositions of $n$, and weak compositions of $n$ respectively.

\begin{definition}[Composition]
   Let $\alpha=(\alpha_1,\cdots,\alpha_k)$ be a sequence of nonnegative integers that sum to $n$. Then $\alpha$ is a \textbf{weak composition} of $n$ and we write $\alpha\models_w n$.  If the elements are strictly positive,  $\alpha$ is a \textbf{(strong) composition} of $n$.  Occasionally it is useful to add zeroes to the end of such a composition to make the sequence longer, usually in the context of the notation $$x^\alpha=x_1^{\alpha_1}x_2^{\alpha_2}\cdots x_n^{\alpha_n}.$$  In either case, we say the length of $\alpha$ ($\ell(\alpha)$) is $k$.  
   If the entries of a strong composition are weakly decreasing, then $\alpha$ is a partition $\alpha \vdash n$.
\end{definition}
Usually we reserve $\lambda$ for partitions, and $\alpha$ and $\beta$ for compositions or weak compositions.

\begin{definition}[Symmetric]
A polynomial $p(x_1,\cdots x_n)$ in $\nonsym{n}$ is \textbf{symmetric} ($p(x_1,\cdots,x_n)\in \sym{n}$) if for every $\alpha=(\alpha_1,\alpha_2,\cdots,\alpha_n)$ with nonnegative integer entries and every $\sigma \in \Sn{n}$,  the coefficient of $x_{1}^{\alpha_1}\cdots x_{n}^{\alpha_n}$ in $p(x_1,\cdots x_n)$ is the same as the coefficient of  $x_{\sigma_1}^{\alpha_1}\cdots x_{\sigma_n}^{\alpha_n}$. i.e. 
 $$\left.p(x_1,\cdots,x_n)\right|_{x_{1}^{\alpha_1}\cdots x_{n}^{\alpha_n}}=\left.p(x_1,\cdots,x_n)\right|_{x_{\sigma_1}^{\alpha_1}\cdots x_{\sigma_n}^{\alpha_n}}$$
\end{definition}
Bases for the symmetric functions are usually indexed by partitions of $n$ ($\lambda\vdash n$), as for example the Schur functions $\{s_\lambda(x_1,\cdots,x_n)\}_{\lambda \vdash n}$.   An excellent introduction to symmetric functions for the unfamiliar reader is \cite{stanley2023enumerative}.
\begin{definition}[Quasisymmetric]
    A polynomial $p(x_1,\cdots x_n)$ in $\nonsym{n}$ is \textbf{quasisymmetric} ($p(x_1,\cdots,x_n)\in \qsym{n}$) if for every  composition $\alpha\models n$  $$\left. p(x_1,\cdots x_n)\right|_{x_{i_1}^{\alpha_1}\cdots x_{i_k}^{\alpha_k}}=\left. p(x_1,\cdots x_n)\right|_{x_{j_1}^{\alpha_1}\cdots x_{j_k}^{\alpha_k}}$$ 
    for any $1\leq i_1<i_2\cdots< i_k\leq n\text{ and }1\leq j_1<j_2\cdots< j_k\leq n.$
\end{definition}
We thus say these polynomials are ``shift invariant."

Bases for the quasisymmetric functions are usually indexed by strong compositions of $n$ ($\alpha\models n$) or the equinumerous subsets of $[n-1]=\{1,2\cdots,n-1\}$,
as for example Gessel's fundamental basis $\{F_\alpha(x_1,\cdots, x_n)\}_{\alpha \models n}$.    
\begin{definition}[Gessel's fundamental basis]  Let $$\operatorname{set}(\alpha)=\{\alpha_1,\alpha_1+\alpha_2,\cdots, \alpha_1+\alpha_2+\cdots +\alpha_{k-1}\}\subset[n-1].$$  Then Gessel's fundamental basis for the quasisymmetric functions is defined by $$F_\alpha(x_1,\cdots, x_n) =\sum\limits_{\substack{1\leq i_1\leq i_2\leq\cdots\leq i_k \\ j\in \operatorname{set}(\alpha)\Rightarrow i_j<i_{j+1}}} x_{i_1}x_{i_2}\cdots x_{i_k}.$$
\end{definition}
The quasisymmetric functions show up in a number of well studied conjectures about the representation theory of the symmetric group and have independent significance as they encode the representation theory of the 0-Hecke Algebra.  An excellent introduction to quasisymmetric functions for the unfamiliar reader is \cite{luoto2013introduction}.
Finally, bases for $\nonsym{n}$ are generally indexed by either weak compositions of $n$ or permutations of $n$, which are equinumerous.

\subsection{Bases from divided difference operators}

We begin by defining the classical key polynomials, Demazure atoms, and Schubert polynomials using divided difference operators.

Let $S_n$ act on the polynomials in the usual way by permuting indices.  Using $s_i$ for the simple transposition which acts by exchanging $x_i$ with $x_{i+1}$ and $1$ for the identity element, define a series of linear operators on $\mathbb{C}[x_1,\cdots x_n]$ by: 
$$
\begin{aligned}
\partial_i & :=\frac{1-s_i}{x_i-x_{i+1}}, \\
\pi_i & :=\partial_i x_i\text{, and} \\
\theta_i & :=x_{i+1} \partial_i=\pi_i-1
.\end{aligned}
$$

Each of these operators satisfies the same commuting relations as the simple transpositions $s_i$, although while $s_i^2=1$,\begin{align*}\partial_i^2&=0,\\\pi_i^2&=\pi_i\text{, and}\\\theta_i^2&=-\theta_i.\end{align*}

If $w\in \Sn{n}$ where $w=s_{i_1}\cdots s_{i_k}$ is a reduced word, define \begin{align*}\partial_w&=\partial_{i_1}\cdots\partial_{i_k},\\
\pi_w&=\pi_{i_1}\cdots\pi_{i_k}\text{, and}\\
\theta_w&=\theta_{i_1}\cdots\theta_{i_k}.\end{align*} Then we have the following definitions:

\begin{definition}
Let $\alpha$ be a weak composition of $n$ and $\sort(\alpha)$ it's weakly decreasing rearrangement.  Let $w_\alpha$ give the shortest permutation in $\Sn{n}$ such that $w_\alpha(\alpha)=\sort(\alpha)$.  Then the \textbf{key polynomial} associated to $\alpha$ is $$\key_\alpha=\pi_{{w_\alpha}^{-1}}x^{\sort(\alpha)}.$$ 
\end{definition}
Key polynomials were first defined as characters by Demazure~ \cite{demazure1974nouvelle} and were then studied in a more combinatorial setting by Lascoux and Sch{\"u}tzenberger~\cite{lascoux1988keys} and Reiner and Shimozono~\cite{reiner1995key}. 

Demazure atoms were first defined by Demazure in \cite{demazure1974nouvelle} as well:
\begin{definition}
Let $\alpha\models_w n$ and $\sort(\alpha)$ it's weakly decreasing rearrangement.  Let $w_\alpha$ give the shortest permutation in $\Sn{n}$ such that $w_\alpha(\alpha)=\sort(\alpha)$.  Then the \textbf{Demazure atom} associated to $\alpha$ is $$\atom_\alpha=\theta_{w_\alpha^{-1}}x^{\sort(\alpha)}.$$ 
\end{definition}
It's worth noting there is some variation in the conventions for forming these bases and moreover some authors go so far as to consider any permutation in place of $w_\alpha$.  While we focus here on the bases, one could easily consider the broader family.
\begin{definition}
Let $w\in \Sn{n}$ and $w_0=[n,n-1,\cdots,1]$ be the long element.  The \textbf{Schubert polynomial} associated to $w$ is $$\mathfrak{S}_w=\partial_{w^{-1}w_0}\left({x_1^{n-1}x_2^{n-2}\cdots x_{n-1}^{1}}\right).$$ 
\end{definition}

Finally, Schubert Polynomials, defined first by Lascoux and Sch{\"u}tzenberger~\cite{alain1982polynomes}, are of particular interest in algebraic geometry since they encode Schubert classes in the cohomology of the complete flag variety, as was first observed by Fulton in \cite{fulton1992flags}.  

 Lascoux and Sch{\"u}tzenberger~\cite{lascoux1989tableaux} first showed that Schubert polynomials are a positive sum of key polynomials (or ``key positive'' as it's commonly expressed). Key polynomials are in turn atom positive, as is evident from the fact that $\pi_i=\theta_i+1$. From a geometric perspective, it is clear that the product of Schubert polynomials must be able to be written as a positive sum of Schubert polynomials, but is a well studied open problem to give a combinatorial proof.  Since Schubert polynomials are key positive, one might hope that by studying the products of key polynomials as a sum of keys, one might be able to indirectly study the product of Schubert polynomials: one impediment to this approach is that the product of key polynomials is not always a positive sum of key polynomials.  In contrast, it was first conjectured by Victor Reiner and Mark Shimozono (in an unpublished manuscript) that the product of key polynomials is always a positive sum of Demazure atoms.  Since key polynomials are a positive sum of Demazure atoms, fully understanding the product of Demazure atoms may help determine the product of two Schubert polynomials. 

\subsection{Symmetric and Quasisymmetric operators}
While Schur functions were originally defined as a quotient, they can also be defined using divided difference operators.  In particular, for $\lambda\vdash n$, one may define $$s_\lambda(x_1,x_2,\cdots,x_n)=\pi_{w_0}x^\lambda.$$  From this definition, it's easy to see that the Schur functions are a special case of the key polynomials when $\alpha$ is the reverse of $\lambda$ (with $\lambda$ written with sufficient zeroes at the the end to be of length $n$.)

Hivert~\cite{Hiv2000} defined a modified ``quasisymmetric action'' of $\Sn{n}$ on weak compositions (and by extension on monomials) by 
\[\qs_i(\alpha) = \left\{\begin{array}{ll} (\alpha_1,\ldots, \alpha_{i+1},\alpha_i,\ldots)& \text{ if }\alpha_i=0 \text{ or }\alpha_{i+1}=0\\
\alpha & \text{ otherwise}
\end{array}\right. .\]  

Going forward we use the $\tilde{\cdot}$ as above to indicate this quasisymmetric action (and differentiate it from the classical ``symmetric'' action that always exchanges positions $i$ and $i+1$).
Hivert showed that as operators, the $\qs_i$ satisfy 
\begin{align*}
\qs_i^2&=1\\
\qs_i\qs_j&=\qs_j\qs_i \quad \text{ for }|i-j|>1\\
\qs_i\qs_{i+1}\qs_i & =\qs_{i+1}\qs_i\qs_{i+1}.
\end{align*}
The action is quasisymmetric in the sense that $f\in \nonsym{n}$ is quasisymmetric iff $$\tilde{\sigma}(f)=f\text{ for all }\sigma\in \Sn{n}.$$  

 Using $\qs_i$ we can define operators 
 \begin{align*}\qpartial_i &= \frac{1}{x_i-x_{i+1}}(1-\qs_i),\\
 \qpi_i &= \qpartial_i x_i\text{, and}\\
 \qbarpi_i&=\qpi_i-1.
 \end{align*}

Given $\beta \wdash n$ and using $x^\beta=x_1^{\beta_1}\cdots x_n^{\beta_n}$, we explicitly have:
\[\qpi_i x^\beta = \left\{\begin{array}{ll} 
0 & \text{ if } \beta_{i+1}\neq 0\\
x^\beta + x^{(\beta_1, \ldots, \beta_i-1,\beta_{i+1}+1,\ldots,\beta_k)}+ x^{(\beta_1, \ldots, \beta_i-2,\beta_{i+1}+2,\ldots,\beta_k)}+\cdots + x^{s_i(\beta)}& \text{ if }\beta_{i+1}=0\\ %double check this
\end{array}\right. .\]

\begin{theorem}[Hivert~\cite{Hiv2000}]
The following relations are satisfied:
\begin{align*}
\qpi_i^2&=\qpi_i\\
\qpi_i\qpi_j&=\qpi_j\qpi_i \quad \text{ for }|i-j|>1\\
\qpi_i\qpi_{i+1}\qpi_i & =\qpi_{i+1}\qpi_i\qpi_{i+1}
\end{align*}
and \begin{align*}
\qbarpi_i^2&=-\qbarpi_i\\
\qbarpi_i\qbarpi_j&=\qbarpi_j\qbarpi_i \quad \text{ for }|i-j|>1\\
\qbarpi_i\qbarpi_{i+1}\qbarpi_i & =\qbarpi_{i+1}\qbarpi_i\qbarpi_{i+1}.
\end{align*}
\end{theorem}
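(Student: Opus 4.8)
The plan is to reduce the three relations for $\qbarpi_i$ to the corresponding relations for $\qpi_i$, and then to prove the latter directly on the monomial basis $\{x^\beta : \beta \wdash n\}$ of $\nonsym{n}$ using the explicit formula for $\qpi_i x^\beta$ recorded above.

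For the reduction, write $A_i := \qpi_i$, so $\qbarpi_i = A_i - 1$. Granting $A_i^2 = A_i$, we get $\qbarpi_i^2 = (A_i-1)^2 = A_i^2 - 2A_i + 1 = 1 - A_i = -\qbarpi_i$. Granting $A_iA_j = A_jA_i$ for $|i-j|>1$, the product $\qbarpi_i\qbarpi_j = A_iA_j - A_i - A_j + 1$ is visibly symmetric in $i$ and $j$. Finally, expanding $(A_i-1)(A_{i+1}-1)(A_i-1)$ and simplifying with $A_i^2=A_i$ gives $\qbarpi_i\qbarpi_{i+1}\qbarpi_i = A_iA_{i+1}A_i - A_iA_{i+1} - A_{i+1}A_i + A_i + A_{i+1} - 1$; the same identity with $i$ and $i+1$ swapped shows $\qbarpi_i\qbarpi_{i+1}\qbarpi_i - \qbarpi_{i+1}\qbarpi_i\qbarpi_{i+1} = A_iA_{i+1}A_i - A_{i+1}A_iA_{i+1}$, which is $0$ once the $\qpi$-braid relation is in hand. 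So it suffices to prove the three relations for $\qpi_i$.

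Idempotency $\qpi_i^2 = \qpi_i$ is checked monomial-by-monomial. If $\beta_{i+1}\neq 0$ then $\qpi_i x^\beta = 0$ and there is nothing to do; if $\beta_{i+1}=0$, then $\qpi_i x^\beta$ is a sum of monomials $x^\gamma$ of which only the leading term $x^\beta$ has $\gamma_{i+1}=0$, every other term being killed by a second application of $\qpi_i$, so $\qpi_i^2 x^\beta = \qpi_i x^\beta$. The commutation $\qpi_i\qpi_j=\qpi_j\qpi_i$ for $|i-j|>1$ is equally immediate from the explicit formula: $\qpi_j$ modifies only the coordinates in positions $j,j+1$ of a weak composition, while the action of $\qpi_i$ on a monomial depends only on its coordinates in positions $i,i+1$; since $\{i,i+1\}\cap\{j,j+1\}=\emptyset$ the two operators touch disjoint coordinates and commute.

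The real content is the braid relation $\qpi_i\qpi_{i+1}\qpi_i = \qpi_{i+1}\qpi_i\qpi_{i+1}$. Because $\qpi_i$ and $\qpi_{i+1}$ involve only $x_i,x_{i+1},x_{i+2}$, it suffices to apply both sides to a monomial $x^\beta$ and follow just the three coordinates $(a,b,c):=(\beta_i,\beta_{i+1},\beta_{i+2})$, the other coordinates being spectators. A short check of which intermediate monomials are annihilated shows both sides vanish unless $b=c=0$: on the left, the innermost $\qpi_i$ forces $b=0$ and the subsequent $\qpi_{i+1}$ forces $c=0$; on the right, the analogous bookkeeping forces $c=0$ and then $b=0$. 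It then remains to evaluate both sides on $x^\beta$ with $(a,b,c)=(a,0,0)$; telescoping the nested sums coming from the $\qpi$-formula, each side collapses to $\sum_{p+q+r=a} x^{\gamma(p,q,r)}$, where $\gamma(p,q,r)$ is $\beta$ with its $(i,i+1,i+2)$-coordinates reset to $(p,q,r)$. Carrying out this finite computation — tracking exactly which monomials survive each of the three applications — is the one genuinely computational step, and the main (if modest) obstacle; the rest of the proof is formal. One could instead hope to derive the braid relation from Hivert's braid relation for the $\qs_i$ via operator identities among $\qpi_i$, $\qpartial_i$, and $\qs_i$, but the monomial computation above is self-contained and short.
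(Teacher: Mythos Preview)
The paper does not supply its own proof of this theorem; it is quoted as a result of Hivert with a citation to \cite{Hiv2000}, so there is nothing to compare against here. Your argument is correct and self-contained: the reduction of the $\qbarpi$-relations to the $\qpi$-relations via $\qbarpi_i=\qpi_i-1$ is valid (your expansion of $(A_i-1)(A_{i+1}-1)(A_i-1)$ checks out once $A_i^2=A_i$ is used), and the monomial-by-monomial verification of the $\qpi$-relations is accurate, including the case analysis showing both braid words annihilate $x^\beta$ unless $\beta_{i+1}=\beta_{i+2}=0$ and the final identification of either side with $\sum_{p+q+r=\beta_i} x^{\gamma(p,q,r)}$. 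This direct computation on the monomial basis is the natural proof and essentially the one Hivert gives.
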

It is additionally easy to see that \begin{align*}
\qpartial_i^2&=0\\
\qpartial_i\qpartial_j&=\qpartial_j\qpartial_i \quad \text{ for }|i-j|>1\\
\end{align*}

but unfortunately, these operators do not satisfy braid relations.  

Hivert noted that using these new operators and using an analogous definition to that of the Schur functions, the result was Gessel's fundamental basis for the quasisymmetric functions:
\begin{theorem}[Hivert~\cite{Hiv2000}]For $\alpha\models n$,  $$F_\alpha(x_1,x_2,\cdots,x_n)=\qpi_{w_0}x^\alpha.$$
\end{theorem}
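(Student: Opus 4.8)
The plan is to prove $F_\alpha(x_1,\dots,x_n) = \qpi_{w_0} x^\alpha$ by a direct computation that tracks which monomials are produced by applying the operators $\qpi_i$ in the order dictated by a reduced word for $w_0$. Since Hivert's operators do not satisfy braid relations, one must first observe that although $\qpi_{w_0}$ is \emph{a priori} dependent on the choice of reduced word, we only claim the identity for one natural reduced decomposition; I would fix $w_0 = (s_1)(s_2 s_1)(s_3 s_2 s_1)\cdots(s_{n-1}\cdots s_1)$, or equivalently build $w_0$ by the ``staircase'' pattern that sorts $\alpha$ into weakly increasing order column by column. (It would be worth a short remark that other reduced words may fail to give $F_\alpha$, in contrast to the classical $\pi_{w_0}$.)

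The core of the argument is to identify $\qpi_{w_0} x^\alpha$ as a generating function over weakly increasing sequences of indices subject to strict-increase conditions at the descent set of $\alpha$, matching the definition of $F_\alpha$ via $\operatorname{set}(\alpha)$. First I would set up the bookkeeping: starting from the monomial $x^\alpha = x_1^{\alpha_1}\cdots x_n^{\alpha_n}$, each application of $\qpi_i$ either annihilates the current term (when $\beta_{i+1}\neq 0$) or spreads a block of exponent mass between positions $i$ and $i+1$ according to the explicit formula for $\qpi_i x^\beta$ given in the excerpt. I would argue by induction on the number of factors of the reduced word that after applying the first several $\qpi_i$'s, the sum is exactly the generating function of the partial ``fillings'' --- i.e., chains $i_1 \le \cdots \le i_k$ built so far --- and that the zero condition $\beta_{i+1}\neq 0$ is precisely what enforces the strictness constraints $i_j < i_{j+1}$ for $j \in \operatorname{set}(\alpha)$. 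The fact that $\qpi_i$ only moves mass to the \emph{right} when the right slot is empty is what makes the exponents behave like a chain of weakly increasing variable indices, and the ``otherwise'' (identity) case never triggers during this process because the staircase word always acts on a position whose right neighbor has been cleared.

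Concretely, the inductive step I expect to carry out is: after processing $s_{n-1}\cdots s_{j}$ for the appropriate range, the accumulated polynomial equals $\sum x_{i_1}\cdots x_{i_{\alpha_1+\cdots}}$ over the chains consistent with the first $j$ ``cut points,'' and applying the next $\qpi$ factor extends each chain by allowing its final block of equal indices to split into a weakly increasing refinement, with a forced strict step exactly at the next element of $\operatorname{set}(\alpha)$. Summing over all terms at the end gives precisely $F_\alpha$ as defined by $F_\alpha = \sum_{1\le i_1\le\cdots\le i_k,\ j\in\operatorname{set}(\alpha)\Rightarrow i_j<i_{j+1}} x_{i_1}\cdots x_{i_k}$. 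One can alternatively package this more slickly by first proving the special (symmetric) case that $\qpi_{w_0} x^{\lambda}$ for a partition $\lambda$ gives $F$ summed over the relevant compositions, or by comparing with the known expansion of $\pi_{w_0}x^\alpha$ into monomials and checking that the quasisymmetric action ``collapses'' the symmetric orbit sum into a single fundamental quasisymmetric function --- but the cleanest route is the direct monomial-tracking induction.

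The main obstacle will be managing the combinatorial bookkeeping cleanly: because $\qpi_i$ acts nontrivially only when $\beta_{i+1}=0$, one has to verify carefully that throughout the staircase application no ``forbidden'' configuration (where the identity branch of $\qs_i$ fires, or where a term is prematurely killed) arises, and that the support of the intermediate polynomial is exactly the set of partial chains with the right constraints. This amounts to a somewhat delicate invariant that must be stated precisely and maintained across $\binom{n}{2}$ operator applications. I would isolate this as a lemma describing the intermediate state after each ``row'' of the staircase, prove that lemma by induction, and then read off the theorem as the terminal case. Once the invariant is correctly formulated, each individual verification is just an application of the explicit formula for $\qpi_i x^\beta$ quoted above, so the difficulty is entirely in finding the right statement of the invariant rather than in any single computation.
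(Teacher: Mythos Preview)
The paper does not prove this statement; it is quoted as a theorem of Hivert with no argument supplied, so there is no in-paper proof to compare against.

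That said, your proposal contains a clear factual error that undermines the framing. You assert that ``Hivert's operators do not satisfy braid relations'' and that $\qpi_{w_0}$ is \emph{a priori} dependent on the reduced word, suggesting other reduced words ``may fail to give $F_\alpha$.'' This is false: the theorem immediately preceding the one you are proving (also cited from Hivert) states explicitly that $\qpi_i\qpi_{i+1}\qpi_i=\qpi_{i+1}\qpi_i\qpi_{i+1}$, together with $\qpi_i^2=\qpi_i$ and the commutation relations. It is only $\qpartial_i$ that fails the braid relations. Hence $\qpi_{w_0}$ is well defined independent of reduced word, and your caveat is unnecessary and incorrect.

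There is a second, more serious problem. You build the mechanism of your argument on the displayed claim that $\qpi_i x^\beta=0$ whenever $\beta_{i+1}\neq 0$, using this annihilation to ``enforce the strictness constraints.'' But with your chosen reduced word $(s_1)(s_2s_1)\cdots(s_{n-1}\cdots s_1)$ the rightmost factor is $\qpi_1$, applied first to $x^\alpha$; since $\alpha$ is a \emph{strong} composition we have $\alpha_2>0$, so under your reading $\qpi_1 x^\alpha=0$ and the entire computation collapses. In fact that displayed formula is inconsistent with the paper's own Lemma~\ref{lem:pibar}: since $\qpi_i=\qbarpi_i+1$, the lemma forces $\qpi_i x^\beta=x^\beta$ (not $0$) when $\beta_i,\beta_{i+1}>0$. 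With the corrected behavior a direct monomial-tracking induction is indeed the standard route, but the invariant is different from what you sketch: $\qpi_i$ \emph{fixes} monomials supported at both $i$ and $i+1$, and the spreading happens only across genuine zero gaps. Your ``zero condition enforces strictness'' narrative therefore needs to be rewritten from scratch before the induction can be carried out.
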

Hivert goes on to define a quasisymmetric generalization of the Hall-Littlewood operators.  Extending this process in \cite{corteel2022compact}, Corteel, Haglund, Mandelshtam, Mason, and Williams went on to define a ``quasisymmetric" analogue of the Macdonald polynomials.

\section{Quasisymmetric operators and bases of the non-symmetric polynomials}
Going forward, we consider analogues of the above bases for $\nonsym{n}$ formed by replacing $s_i$ with $\qs_i$.  Since the $\qs_i$ can be used in this way to define Gessel's fundamental basis, we refer to these new bases as ``fundamental" analogs of the classical bases.  In this setting, it's worth first observing that there appears to be no natural analogue of the Schubert polynomials, since $\qpartial_i$ does not satisfy braid relations.  Thus we turn our attention to the key basis. 

\subsection{A fundamental key basis by another name}
In order to define the first basis, it is necessary to define the following, which gives a strong composition from a weak composition:
\begin{definition}[flat]
Let the \textbf{flat} of a weak composition $\alpha$ ($\strong(\alpha)$) be the strong composition formed by moving all 0s to the right end of $\alpha$. 
\end{definition}
This can be seen as analogous to the $\sort(\cdot)$ operator on compositions, which results in a natural partition.  Thus remembering the divided difference definition of the key polynomials suggests the following quasisymmetric analog:
\begin{definition}[Fundamental slide polynomial]\label{def:qkey}  Let $\alpha \wdash n$ and $w_\alpha(\alpha)=\strong(\alpha)$ such that $w_\alpha$ has minimal length.  Then 
\[\qslide_\alpha (x_1,\cdots x_n) = \qpi_{{w_\alpha}^{-1}}x^{\strong(\alpha)}.\]
\end{definition}
\begin{example}\begin{align*}\qslide_{(0,3,1,0,1)}&=\qpi_1\qpi_2\qpi_4\qpi_3 (x_1^3x_2^1x_3^1)\end{align*}
\end{example}
By construction, Gessel's Fundamental quasisymmetric functions are a special case of $\qslide_\alpha$.  In particular, when $\alpha$ is a strong composition of $n$ of length $k$, and $\beta$ is the weak composition of $n$ formed by adding zeros to the front of $\alpha$, we have $$F_\beta=\qslide_\alpha.$$

While Hivert does not study the resulting polynomials as a basis, he gives an explicit expansion into monomials.  To restate his formula in modern language, we need some additional definitions which relate to compositions:

\begin{definition}[cumulative sum]
Let $$\csum((\alpha_1,\ldots,\alpha_k)) =(\alpha_1,\alpha_1+\alpha_2,\ldots, \alpha_1+\cdots + \alpha_k)$$ give the \textbf{cumulative sum} of $\alpha$.
\end{definition}

\begin{definition}[dominance order]
Define a \textbf{dominance order} on weak compositions of $n$ by $\beta \doml \gamma$ if and only if for all $i$ $$\csum(\beta)_i \leq \csum(\gamma)_i.$$  \end{definition} 
\begin{definition}[refines]
For $\alpha$ and $\beta$ strong compositions, we say that $\beta$ \textbf{refines} $\alpha$, denoted $\beta \succcurlyeq \alpha$, if there exists an integer sequence $0=j_0<j_1<j_2<\cdots j_n=\ell(\alpha)$ such that $$\alpha_i = \beta_{j_{i-1}+1}+\beta_{j_{i-1}+2}+\cdots +\beta_{j_i}$$ for each $i$, that is, if each part of $\alpha$ can be obtained by summing consecutive parts of $\beta$.
\end{definition}
Thus $(1,2,1,3)\succcurlyeq (4,3)$.
It's worth noting that there is not general consensus in the literature about the direction of the inequality sign, with some authors following the same convention as here and others using the reverse.  

Hivert proves:
\begin{theorem}[\cite{Hiv2000} Theorem 3.29]\label{thm:slide} Let $\alpha \wdash n$ and $\tilde{\sigma}(\alpha)=\strong(\alpha)$ such that $\sigma$ has minimal length.  Then 
    \[\qslide_\alpha(x_1,\ldots,x_n) = \sum_{\substack{\beta\domr \alpha\\ \strong(\beta) \succcurlyeq \strong(\alpha)}} x_1^{\beta_1} \cdots x_n^{\beta_n}.\]
\end{theorem}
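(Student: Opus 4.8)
The plan is to compute $\qslide_\alpha = \qpi_{w_\alpha^{-1}} x^{\strong(\alpha)}$ by induction on the length of $w_\alpha$, peeling off one simple reflection at a time and tracking how the monomial support evolves. First I would reduce to understanding the action of a single $\qpi_i$ on a sum of monomials of the shape appearing on the right-hand side. The key observation is structural: a reduced word for $w_\alpha^{-1}$ can be chosen so that each left multiplication by $\qpi_i$ is applied to a composition $\gamma$ with $\gamma_{i+1} = 0$ (this is exactly the situation in which $\qs_i$ and $\qpi_i$ act nontrivially, and it mirrors the classical fact that $w_\alpha$ sorts $\alpha$ by only ever moving a strictly positive entry leftward past a zero). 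In that case the explicit formula quoted before Hivert's theorem gives $\qpi_i x^\gamma = x^\gamma + x^{(\ldots,\gamma_i-1,1,\ldots)} + \cdots + x^{s_i(\gamma)}$, i.e. it spreads the mass of $\gamma_i$ across positions $i$ and $i+1$ in every intermediate way.

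Next I would set up the two-sided characterization. For the containment ``$\subseteq$'': I would show by induction that every monomial $x^\beta$ produced by $\qpi_{w_\alpha^{-1}} x^{\strong(\alpha)}$ satisfies $\beta \domr \alpha$ and $\strong(\beta) \succcurlyeq \strong(\alpha)$. The dominance inequality is the easy half of each inductive step: applying $\qpi_i$ to $x^\gamma$ with $\gamma_{i+1}=0$ only moves weight to the right within a window, which can only increase the partial sums $\csum(\cdot)_j$, so $\domr$ is preserved; and one checks the base monomial $x^{\strong(\alpha)}$ already dominates $\alpha$ (indeed $\strong(\alpha) \domr \alpha$ since flattening pushes zeros right). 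The refinement condition is preserved because moving weight rightward across a zero, or partially filling a zero slot, can only split a part of $\strong(\gamma)$ into consecutive pieces or leave it unchanged — never merge — so $\strong(\beta) \succcurlyeq \strong(\gamma) \succcurlyeq \strong(\alpha)$ by transitivity of $\succcurlyeq$.

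For the reverse containment ``$\supseteq$'' I would argue that any $\beta$ with $\beta \domr \alpha$ and $\strong(\beta) \succcurlyeq \strong(\alpha)$ is actually hit, and with coefficient exactly one. One clean way: observe that these two conditions pin down $\beta$ as arising from $\strong(\alpha)$ by a canonical sequence of ``slide'' moves, and reverse-engineer which term of which $\qpi_i$ in the product produces it; alternatively, count — show the number of weak compositions satisfying both conditions equals the number of monomials (with multiplicity) in $\qpi_{w_\alpha^{-1}} x^{\strong(\alpha)}$, which one can get from the product formula for the number of terms of $\qpi_i$ acting on a composition with a zero in slot $i+1$, using that the relevant intermediate compositions never develop an obstruction (a nonzero in slot $i+1$) thanks to the minimal-length choice of $w_\alpha$. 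The multiplicity-one claim follows because at each step the intervals over which weight is redistributed are disjoint, so there is no overlap of contributions.

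The main obstacle I expect is the bookkeeping in the reverse/multiplicity direction: ensuring that the minimal-length hypothesis on $w_\alpha$ really does guarantee that every intermediate composition in the chosen reduced word has a zero precisely where $\qpi_i$ needs it (so no term is killed and no braid-type ambiguity arises), and then checking that distinct $\beta$'s come from distinct sequences of choices. Once the ``no obstruction'' lemma is in hand, the dominance-plus-refinement description should drop out by matching the combinatorics of slide moves against the monomial-spreading in the explicit $\qpi_i$ formula. It is worth remarking that this is essentially a reformulation of Hivert's Theorem 3.29 into the composition language of Assaf--Searles, so the content is already present in \cite{Hiv2000}; the work here is the translation and verifying the conventions line up.
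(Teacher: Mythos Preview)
The paper does not supply its own proof of this statement: it is quoted as Hivert's Theorem~3.29 and attributed to \cite{Hiv2000} without argument, so there is nothing in the paper to compare your sketch against directly.

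That said, your inductive strategy---peel off one $\qpi_i$ via $\qslide_\alpha=\qpi_i\,\qslide_{\qs_i\alpha}$ and track how the dominance and refinement conditions evolve---is exactly the template the paper uses for the companion result on $\qatom_\alpha$ (Theorem~\ref{thm:qatom}). The part of your sketch that is underspecified is the one you yourself flag: the ``no obstruction'' claim that every intermediate monomial $x^\gamma$ has $\gamma_{i+1}=0$ does not hold in general, so the spreading description of $\qpi_i$ does not apply to every term, and the disjoint-intervals heuristic for multiplicity one is not enough on its own. The way the paper handles the analogous issue for $\qatom_\alpha$ is instructive: rather than a blanket no-obstruction lemma, it proves two matched containment lemmas (Lemmas~\ref{lem:siqshift1} and~\ref{lem:siqshift2}) describing exactly how the target monomial set transforms when $\alpha$ is replaced by $\qs_i\alpha$, and then checks that $\qbarpi_i$ realizes that transformation term by term, with a uniqueness argument for the preimage giving multiplicity one. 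Adapting that structure---proving the $\qpi_i$-analogues of those two lemmas for the set $\{\beta:\beta\domr\alpha,\ \strong(\beta)\succcurlyeq\strong(\alpha)\}$---would turn your outline into a complete proof.
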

\begin{example}\begin{align*}\qslide_{(0,3,1,0,1)}&=x_{1}^{3} x_{2} x_{3} + x_{1}^{3} x_{2} x_{4} + x_{1}^{3} x_{3} x_{4} + x_{1}^{2} x_{2} x_{3} x_{4} + x_{1} x_{2}^{2} x_{3} x_{4} + x_{2}^{3} x_{3} x_{4} + x_{1}^{3} x_{2} x_{5} + x_{1}^{3} x_{3} x_{5}\\&\hspace{9cm} + x_{1}^{2} x_{2} x_{3} x_{5} + x_{1} x_{2}^{2} x_{3} x_{5} + x_{2}^{3} x_{3} x_{5}\end{align*}
\end{example}

Assaf and Searles~\cite{AS2017} discovered the polynomials $\qslide_\alpha$ independently via a different construction leading to the right hand side of Theorem \ref{thm:slide}, and named them the \textbf{ fundamental slide polynomials} although they appear not to have identified the connection to the divided difference operators or Hivert's work.  (We in fact use their language  and notation in this section rather than Hivert's original language.)  We will continue to use the name here, although in this context we would suggest ``fundamental key polynomial'' might be more appropriate to the naming conventions in this area.  We follow Assaf and Searles' convention and refer to these going forward as fundamental slide polynomials since they use the similar term ``quasisymmetric key polynomial'' to a different family of polynomials whose definition is motivated by their work with Konhert diagrams.

\subsection{Representation Theory of the Fundamental Slide Polynomials}
It is well known that the Schur functions are the characters of the irreducible representations of the enveloping algebra of $gl_N$, $U(gl_N)$.  Moreover, taking highest weight representatives, $\xi$, of the representation indexed by 
$\sigma(\lambda)$ it was conjectured (with a false proof) by Demazure and proven by Andersen in \cite{MR0782239} that the $$\operatorname{ch}(\mathcal{U}_0(\mathfrak{b}_+)\xi)=\key_{\sigma^{-1}(\lambda)} .$$

By connecting Hivert's work on quasisymmetric divided difference operators to Assaf and Searles work on the fundamental slide polynomials, we recognized that the fundamental slide polynomials appearing in \cite{AS2017} are in fact characters, mimicking the representation theoretic importance of the key polynomials. 

In particular, Hivert in \cite{Hiv2000} builds on the work of Krob and Thibon in \cite{NSYM4}, to show that a $\mathcal{U}_0(gl_N)$-module irreducible module $\mathbf{D}_\alpha$ identified in \cite{NSYM4} has the property that $$\mathrm{ch}(\mathbf{D}_{\alpha})=\pi_{\omega}X^{\alpha}=F_{\alpha}$$  and that for an appropriately chosen extremal vector $\xi$ and suitably generalized Borel subalgebra $\mathfrak{b}_+$, $$\operatorname{ch}(\mathcal{U}_0(\mathfrak{b}_+)\xi)=\qslide_{\sigma^{-1}(\alpha)}.$$
We omit the details here, as they are long and not as straightforward as in the classical case. Among other technicalities, $\mathcal{U}_0(gl_N)$ is only a bialgebra, and thus more complex to work with than the enveloping algebra $U(gl_N)$. The interested reader will find the details spread between \cite{Hiv2000} and \cite{NSYM4}.

\section{Fundamental Demazure Atoms}

Imitating the definition of the classical Demazure atom, we can define a fundamental analogue, which is also a basis for $\nonsym{n}$.

\begin{definition}[Fundamental Demazure atom]\label{def:qatom} 
Given a weak composition $\alpha$ and a minimal length $\sigma \in S_n$ such that $\strong(\alpha) = w_\alpha(\alpha)$, the {\em fundamental Demazure atom} is 
\[\qatom_\alpha = \qbarpi_{{w_{\alpha}}^-1} x^{\strong(\alpha)}.\]
\end{definition}

\begin{example}\begin{align*}\qatom_{(0,0,3,2,0,1)}&=\qbarpi_2\qbarpi_1\qbarpi_3\qbarpi_2\qbarpi_5\qbarpi_4\qbarpi_3 (x_1^3x_2^2x_3^1)\\&=x_{1}^{2} x_{3} x_{4}^{2} x_{6} + x_{1} x_{2} x_{3} x_{4}^{2} x_{6} + x_{2}^{2} x_{3} x_{4}^{2} x_{6} + x_{1} x_{3}^{2} x_{4}^{2} x_{6} + x_{2} x_{3}^{2} x_{4}^{2} x_{6} + x_{3}^{3} x_{4}^{2} x_{6}\end{align*}
\end{example}
In order to express the same polynomials more explicitly, it is helpful to have the following lemma.
\begin{lemma}\label{lem:pibar}
For $1\leq i\leq n-1$ and $\alpha=(\alpha_1,\ldots, \alpha_n)$ a weak composition, we have 
\[\qbarpi_ix^\alpha = \left\{\begin{array}{ll} 
0 & \text{ if } \alpha_i=\alpha_{i+1}=0 \text{ or } \alpha_i,\alpha_{i+1}>0\\
\sum_{j=1}^{\alpha_i} x^{(\alpha_1,\ldots, \alpha_i - j,j,\ldots,\alpha_n)} & \text{ if }\alpha_i>0 \text{ and }\alpha_{i+1}=0\\
 - \sum_{j=0}^{\alpha_{i+1}-1} x^{(\alpha_1,\ldots, j,\alpha_{i+1}-j,\ldots,\alpha_n)} &\text{ if }\alpha_i=0 \text{ and } \alpha_{i+1}>0.
\end{array}\right.\]
\end{lemma}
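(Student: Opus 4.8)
The plan is to compute $\qbarpi_i x^\alpha = \qpi_i x^\alpha - x^\alpha$ directly from the explicit formula for $\qpi_i x^\beta$ already recorded in the excerpt (just above Hivert's theorem). That formula says $\qpi_i x^\beta = 0$ when $\beta_{i+1}\neq 0$, and $\qpi_i x^\beta = x^\beta + x^{(\ldots,\beta_i-1,\beta_{i+1}+1,\ldots)} + \cdots + x^{s_i(\beta)}$ when $\beta_{i+1}=0$. I would split into the three cases of the lemma according to whether $\alpha_i$ and $\alpha_{i+1}$ are zero or positive.

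First, the case $\alpha_i = \alpha_{i+1} = 0$: then $\alpha_{i+1} = 0$, so $\qpi_i x^\alpha$ is the telescoping sum from $x^\alpha$ down to $x^{s_i(\alpha)}$, but since $\alpha_i = 0$ the sum has a single term $x^\alpha$ (there is nothing between $\alpha_i = 0$ and $\alpha_{i+1}=0$), hence $\qpi_i x^\alpha = x^\alpha$ and $\qbarpi_i x^\alpha = 0$. Next, the case $\alpha_i, \alpha_{i+1} > 0$: here $\beta_{i+1} = \alpha_{i+1}\neq 0$, so $\qpi_i x^\alpha = 0$ and $\qbarpi_i x^\alpha = -x^\alpha$ — wait, this would contradict the claimed value $0$, so I need to be careful: in fact $\qs_i(\alpha) = \alpha$ when both entries are positive, so $\qpartial_i x^\alpha = \frac{1}{x_i-x_{i+1}}(x^\alpha - x^\alpha) = 0$, hence $\qpi_i x^\alpha = \qpartial_i(x_i x^\alpha) = 0$ directly (the shifted composition still has both relevant entries positive), giving $\qbarpi_i x^\alpha = \qpi_i x^\alpha - x^\alpha = -x^\alpha + x^\alpha$? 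No — $\qbarpi_i = \qpi_i - 1$ as operators, so $\qbarpi_i x^\alpha = \qpi_i x^\alpha - x^\alpha = -x^\alpha$. This is the one genuine subtlety: the lemma claims $0$ in this case, so either the lemma intends $\qbarpi_i$ applied with an extra convention, or — more likely — I have the formula for $\qpi_i$ slightly off and should recompute $\qpi_i x^\alpha$ when $\alpha_i,\alpha_{i+1}>0$ from $\qpi_i = \qpartial_i x_i$ paying attention to whether $x_i x^\alpha$ has a zero in position $i+1$; since it does not, $\qpartial_i(x_i x^\alpha) = 0$ only if $\qs_i$ fixes $x_i x^\alpha$, which it does, so $\qpi_i x^\alpha = 0$ and indeed $\qbarpi_i x^\alpha = -x^\alpha \neq 0$. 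I will need to resolve this against the stated lemma — the resolution is that the excerpt's displayed formula for $\qpi_i x^\beta$ in the case $\beta_{i+1}=0$ already includes the $x^\beta$ term, so $\qbarpi_i x^\beta$ removes it, and in the case $\alpha_i, \alpha_{i+1} > 0$ one must check $\qpi_i x^\alpha = x^\alpha$ (not $0$), which follows because $\qpartial_i x_i x^\alpha$ with $\qs_i$ acting trivially gives $\frac{x_i x^\alpha - x_i x^\alpha}{x_i - x_{i+1}} = 0$... I would reconcile this by rechecking Hivert's $\qpi_i$ action carefully and trusting the lemma statement, treating the apparent discrepancy as the point requiring the most care.

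For the remaining two cases the computation is clean. If $\alpha_i > 0$ and $\alpha_{i+1} = 0$: the displayed formula gives $\qpi_i x^\alpha = \sum_{j=0}^{\alpha_i} x^{(\alpha_1,\ldots,\alpha_i - j, j,\ldots,\alpha_n)}$, and subtracting the $j=0$ term $x^\alpha$ leaves $\qbarpi_i x^\alpha = \sum_{j=1}^{\alpha_i} x^{(\ldots,\alpha_i-j,j,\ldots)}$, matching the lemma. If $\alpha_i = 0$ and $\alpha_{i+1} > 0$: now $\beta_{i+1} = \alpha_{i+1} \neq 0$ so $\qpi_i x^\alpha = 0$, hence $\qbarpi_i x^\alpha = -x^\alpha = -x^{(\ldots,0,\alpha_{i+1},\ldots)}$, which is the $j=0$ term of $-\sum_{j=0}^{\alpha_{i+1}-1} x^{(\ldots,j,\alpha_{i+1}-j,\ldots)}$; the other terms $j=1,\ldots,\alpha_{i+1}-1$ should not appear, so I must double-check here too — this suggests the lemma's third case is obtained instead by writing $\qbarpi_i = \qpartial_i x_{i+1}$ analogue or by using $\qbarpi_i x^\alpha = \qpi_i x^\alpha - x^\alpha$ where $\qpi_i$ in this orientation is computed from a "lowering" version. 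The cleanest route, which I would ultimately follow, is to derive the third case from the second by applying $\qs_i$ and the relation $\qbarpi_i = -\qs_i \qbarpi_i$ or the identity $\qbarpi_i x^{\qs_i(\alpha)} = -\qs_i(\qbarpi_i x^\alpha)$ when $\qs_i$ genuinely swaps the two coordinates, so that the third case is forced by the second. I expect the main obstacle to be exactly this bookkeeping around orientation and signs in the $\alpha_i = 0 < \alpha_{i+1}$ case, and reconciling it with the displayed $\qpi_i$ formula; once the correct orientation convention is pinned down, each case is a two-line verification.
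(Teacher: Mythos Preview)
The paper states this lemma without proof, so there is nothing to compare against directly. Your instinct to derive it from $\qbarpi_i=\qpi_i-1$ together with the displayed formula for $\qpi_i$ is the natural one, and your computations in the two ``clean'' cases ($\alpha_i=\alpha_{i+1}=0$ and $\alpha_i>0,\ \alpha_{i+1}=0$) are correct. The confusion you run into in the other two cases is not a bookkeeping slip on your part: it is a genuine inconsistency in the paper. With $\qpi_i=\qpartial_i x_i$ and $\qbarpi_i=\qpi_i-1$ as written, one really does get $\qbarpi_i x^\alpha=-x^\alpha$ whenever $\alpha_{i+1}>0$, contradicting both the first and the third cases of the lemma.

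The formulas in the lemma are instead exactly those of the operator $x_{i+1}\qpartial_i$, the direct analogue of the classical $\theta_i=x_{i+1}\partial_i$. In the symmetric setting one has the identity $x_{i+1}\partial_i=\partial_i x_i-1$, but in Hivert's quasisymmetric setting this fails, precisely because $\qs_i(x_i f)\neq x_{i+1}\,\qs_i(f)$ when $f=x^\alpha$ with $\alpha_{i+1}>0$. If you compute $x_{i+1}\qpartial_i\, x^\alpha$ directly in each of the four cases (using that $\qs_i$ fixes $x^\alpha$ when both entries are positive or both are zero, and swaps them otherwise, together with the factorization $\tfrac{x_i^a-x_{i+1}^a}{x_i-x_{i+1}}=\sum_{j=0}^{a-1}x_i^{a-1-j}x_{i+1}^{j}$), you recover the lemma line by line. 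The worked example $\qatom_{(0,0,3,2,0,1)}$ in the paper confirms that this is the operator actually being used. So the right fix is to abandon the route through the displayed $\qpi_i$ formula and verify the lemma from $\qbarpi_i=x_{i+1}\qpartial_i$; your attempted symmetry argument for the third case is unnecessary once you do this.
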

\begin{lemma}\label{lem:qatomrec}
Let $\alpha$ be a weak composition.  Then if  $\alpha_{i}= 0$ and $\alpha_{i+1}\neq 0$,
\[\qatom_{\alpha} = \qbarpi_i\qatom_{s_i(\alpha)} .\]
\end{lemma}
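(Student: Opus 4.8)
The plan is to unwind the definition of $\qatom_\alpha$ and exhibit a reduced word for $w_\alpha^{-1}$ that begins (reading left to right) with $\qbarpi_i$, so that the operator factors as $\qbarpi_i$ times the operator defining $\qatom_{s_i(\alpha)}$. The key observation is that when $\alpha_i = 0$ and $\alpha_{i+1} \neq 0$, applying $\qs_i = s_i$ to $\alpha$ (legal since $\alpha_i = 0$) moves a zero to the right, bringing $\alpha$ one step closer to $\strong(\alpha)$; concretely $\strong(s_i(\alpha)) = \strong(\alpha)$, and $s_i(\alpha)$ sits strictly above $\alpha$ in the relevant weak order sense. So if $w_\alpha$ is a minimal-length permutation with $w_\alpha(\alpha) = \strong(\alpha)$, I want to argue that $w_\alpha$ can be taken to factor as $w_{s_i(\alpha)}\, s_i$ with $\ell(w_\alpha) = \ell(w_{s_i(\alpha)}) + 1$, where $w_{s_i(\alpha)}$ is a minimal-length permutation sending $s_i(\alpha)$ to $\strong(\alpha)$.

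First I would establish the length/factorization claim carefully. Since $\alpha_i = 0 < \alpha_{i+1}$, the values $\alpha_i$ and $\alpha_{i+1}$ are "out of order" relative to $\strong(\alpha)$ in the precise sense that $\strong$ pushes zeros rightward; one checks that every permutation $w$ with $w(\alpha) = \strong(\alpha)$ must send position $i$ to a later position than position $i+1$ — i.e. $w$ has an inversion at $\{i, i+1\}$ — because in $\strong(\alpha)$ the entry coming from the (zero) slot $i$ ends up weakly to the right of the entry from slot $i+1$. This is the standard argument that $w_0$-type "sorting" permutations admit reduced words ending in a prescribed $s_i$ whenever $i, i+1$ form an inversion, adapted to $\strong$ in place of $\sort$. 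Given this, $w_\alpha = u\, s_i$ for some $u$ with $\ell(u) = \ell(w_\alpha) - 1$, and $u(s_i(\alpha)) = w_\alpha(\alpha) = \strong(\alpha)$; minimality of $\ell(w_\alpha)$ forces $u$ to be a minimal-length permutation carrying $s_i(\alpha)$ to $\strong(\alpha)$, so we may take $w_{s_i(\alpha)} = u$.

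Then the computation is immediate: $w_\alpha^{-1} = s_i\, w_{s_i(\alpha)}^{-1}$ is a reduced factorization, so by the definition of $\qbarpi_w$ along a reduced word (which is well-defined here only because we are committing to this particular reduced word — see the remark below),
\[
\qatom_\alpha = \qbarpi_{w_\alpha^{-1}} x^{\strong(\alpha)} = \qbarpi_i\, \qbarpi_{w_{s_i(\alpha)}^{-1}} x^{\strong(s_i(\alpha))} = \qbarpi_i\, \qatom_{s_i(\alpha)},
\]
using $\strong(s_i(\alpha)) = \strong(\alpha)$ in the middle step.

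The main obstacle is the well-definedness issue flagged in the excerpt: the operators $\qbarpi_j$ do \emph{not} satisfy braid relations, so $\qbarpi_{w_\alpha^{-1}}$ depends a priori on the chosen reduced word, and the definition of $\qatom_\alpha$ implicitly picks one. I would handle this by checking that the ambiguity is harmless \emph{in this situation}: either (a) the paper's convention pins down a canonical reduced word (e.g. a lexicographically least one compatible with a fixed way of building $w_\alpha$) and one verifies the factorization respects that convention, or (b) one shows directly that for the specific minimal-length $w_\alpha$ arising from $\strong$, all reduced words give the same operator on $x^{\strong(\alpha)}$ — which should follow because the obstruction to braid relations for $\qbarpi$ only bites on monomials with two adjacent positive entries in the relevant window, a configuration that does not arise along the sorting path from $x^{\strong(\alpha)}$ back toward $x^\alpha$. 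Pinning this down rigorously — likely by an induction that simultaneously proves Lemma~\ref{lem:pibar}-style monomial expansions stay "sorted enough" — is where the real work lies; the rest is bookkeeping with lengths and inversions.
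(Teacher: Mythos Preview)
Your overall strategy is the right one and is exactly how this lemma follows from the definition: factor $w_\alpha = w_{s_i(\alpha)}\,s_i$ with lengths adding, invert to get $w_\alpha^{-1} = s_i\,w_{s_i(\alpha)}^{-1}$ reduced, and peel off the leading $\qbarpi_i$. The paper in fact states the lemma without proof, treating it as immediate from Definition~\ref{def:qatom}.

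However, your ``main obstacle'' is based on a misreading of the paper. The operators $\qbarpi_i$ \emph{do} satisfy the braid relations: Hivert's theorem quoted in the paper explicitly lists $\qbarpi_i\qbarpi_{i+1}\qbarpi_i = \qbarpi_{i+1}\qbarpi_i\qbarpi_{i+1}$ alongside commutation and $\qbarpi_i^2 = -\qbarpi_i$. It is only the divided difference operators $\qpartial_i$ that fail the braid relation (which is why the paper says there is no natural fundamental Schubert analogue). Consequently $\qbarpi_{w}$ is genuinely independent of the reduced word, $\qatom_\alpha$ is well defined without any convention on reduced words, and the entire final paragraph of your proposal --- the inductive check that monomials stay ``sorted enough,'' the alternative (a)/(b) dichotomy --- is unnecessary. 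Once you drop that paragraph, what remains is a clean two-line proof: verify $w_\alpha(i) > w_\alpha(i+1)$ (because position $i$ is a zero and position $i+1$ is not, so $w_\alpha$ sends $i$ past all nonzero positions), conclude $w_\alpha s_i = w_{s_i(\alpha)}$ with $\ell(w_\alpha) = \ell(w_{s_i(\alpha)})+1$, and apply the definition.
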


\begin{definition}\label{def:qshift}
The {\em fundamental shift} of a vector $v=(v_1,\ldots,v_n)$ with nonzero entries in positions $S=\{i_1<i_2<\cdots<i_k\}$ is the vector
$\qshift(v)=(w_1,\ldots,w_n)$ such that
\begin{enumerate}
\item if $j\in S$, 
\[w_j =\left\{\begin{array}{ll}1 & \text{if }j-1 \notin S\\v_j & \text{otherwise}\end{array}\right.,\]
\item if $i_j \in S$, $j<k$, and $i_j+1 \notin S$, then $w_{i_j+1}=v_{i_{j+1}}-1$, 
\item if $1 \notin S$ and $j=\min(S)$, $w_1 = v_{j}-1$, and
\item $w_j=0$ for all other $j$.
\end{enumerate}
That is, the fundamental shift can be formed by working right to left, reducing any positive entry $a$ in $v$ with a 0 beside it to 1 and shifting the remaining $a-1$ left until it sits beside the next nonzero entry.
\end{definition}
\begin{example}
    $\qshift([\fbox{0,0,3},\fbox{0,1},\fbox{4},\fbox{0,5}])=[\fbox{2,0,1},\fbox{0,1},\fbox{4},\fbox{4,1}]$
\end{example}
We use the following two technical lemmas to give an explicit characterization of $\qatom_\alpha(x_1,\cdots,x_n)$
\begin{lemma}\label{lem:siqshift1}
Let $\alpha$ be a weak composition and $i \in \{1,\ldots,\ell(\alpha)\}$ with $\alpha_i=0$.  Let $\beta$ be a weak composition such that $\alpha\doml\beta\doml\qshift(\alpha)$.  Then the composition $\gamma$ defined by $\gamma_i=\beta_i+\beta_{i+1}$, $\gamma_{i+1}=0$, and $\gamma_j=\beta_j$ for $j \notin\{i,i+1\}$ satisfies $\qs_i\alpha\doml\gamma\doml\qshift(\qs_i\alpha)$.
\end{lemma}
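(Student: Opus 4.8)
The plan is to verify the two chain inequalities $\qs_i\alpha\doml\gamma$ and $\gamma\doml\qshift(\qs_i\alpha)$ by directly comparing cumulative sums, using that $\csum(\gamma)$ agrees with $\csum(\beta)$ everywhere except possibly at position $i$, where $\csum(\gamma)_i=\csum(\beta)_i+\beta_{i+1}=\csum(\beta)_{i+1}$. The key observation is that since $\alpha_i=0$, we have $\csum(\alpha)_{i-1}=\csum(\alpha)_i$, and $\qs_i$ either fixes $\alpha$ (so $\gamma=\beta$ and there is nothing to prove) or swaps positions $i,i+1$, which since $\alpha_i=0$ means $\qs_i\alpha$ has $(\qs_i\alpha)_i=\alpha_{i+1}$ and $(\qs_i\alpha)_{i+1}=0$. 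Thus $\csum(\qs_i\alpha)$ agrees with $\csum(\alpha)$ everywhere except at position $i$, where $\csum(\qs_i\alpha)_i=\csum(\alpha)_i+\alpha_{i+1}=\csum(\alpha)_{i+1}$.

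First I would handle the lower bound $\qs_i\alpha\doml\gamma$. For $j\neq i$ this reduces to $\csum(\alpha)_j\leq\csum(\beta)_j$, which is immediate from $\alpha\doml\beta$. For $j=i$ we need $\csum(\alpha)_{i+1}\leq\csum(\beta)_{i+1}$, which again is just one of the inequalities in $\alpha\doml\beta$. Next, for the upper bound $\gamma\doml\qshift(\qs_i\alpha)$, the case $j\neq i$ reduces to $\csum(\beta)_j\leq\csum(\qshift(\qs_i\alpha))_j$, and the case $j=i$ reduces to $\csum(\beta)_{i+1}\leq\csum(\qshift(\qs_i\alpha))_i$. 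Here I would need the structural fact that $\qshift(\qs_i\alpha)$ and $\qshift(\alpha)$ have the same cumulative sums at all positions $\neq i$, while at position $i$ one has $\csum(\qshift(\qs_i\alpha))_i=\csum(\qshift(\alpha))_{i+1}$; this follows by tracing through Definition~\ref{def:qshift}, since $\qshift$ processes entries right-to-left and the swap at $\alpha_i=0$ only changes which of two adjacent columns receives the ``$1$'' versus the leftover mass, leaving the running total past column $i+1$ untouched. Combined with $\beta\doml\qshift(\alpha)$, both remaining inequalities then follow.

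The main obstacle I anticipate is the careful bookkeeping in the upper-bound step: $\qshift$ is defined by a somewhat delicate case analysis (conditions on whether $j-1\in S$, whether $i_j+1\in S$, the special role of position $1$), so establishing precisely how $\csum(\qshift(\cdot))$ behaves under the swap $\alpha\mapsto\qs_i\alpha$ requires checking several configurations of the support set $S$ near position $i$ — in particular whether $i-1$ and $i+1$ lie in $S$, and whether position $i+1$ is the minimum of $S$. I would organize this as a short sub-lemma: \emph{if $\alpha_i=0$ then $\csum(\qshift(\qs_i\alpha))_j=\csum(\qshift(\alpha))_j$ for $j\neq i$ and $\csum(\qshift(\qs_i\alpha))_i=\csum(\qshift(\alpha))_{i+1}$}, proved by direct inspection of Definition~\ref{def:qshift}. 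Once that is in hand, the lemma is a two-line consequence of the hypotheses $\alpha\doml\beta\doml\qshift(\alpha)$ together with the analogous (and easier) facts about $\csum(\qs_i\alpha)$ versus $\csum(\alpha)$ noted above. I would also double-check the degenerate case where $\gamma$ might fail to be a genuine weak composition (negative entries), but since $\gamma_i=\beta_i+\beta_{i+1}\geq0$ and $\gamma_{i+1}=0$, no issue arises.
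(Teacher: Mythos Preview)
Your approach is essentially the paper's: both compare cumulative sums coordinate by coordinate, reduce the lower bound directly to $\alpha\doml\beta$, and reduce the upper bound to a comparison of $\qshift(\alpha)$ with $\qshift(\qs_i\alpha)$. However, the sub-lemma you propose is stated incorrectly. The equality $\csum(\qshift(\qs_i\alpha))_j=\csum(\qshift(\alpha))_j$ \emph{fails} at $j=i+1$ in general. For instance, take $\alpha=(1,0,2,3)$ and $i=2$: then $\qshift(\alpha)=(1,1,1,3)$ with cumulative sums $(1,2,3,6)$, while $\qshift(\qs_2\alpha)=\qshift((1,2,0,3))=(1,2,2,1)$ with cumulative sums $(1,3,5,6)$, so the two disagree at position $i+1=3$. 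The reason is that the swap moves the zero from column $i$ to column $i+1$, which changes how the entry in column $i+2$ is processed by $\qshift$---it now sheds mass leftward into column $i+1$---so the running total there genuinely increases.

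This does not wreck your argument, because at $j=i+1$ you only need the \emph{inequality} $\csum(\gamma)_{i+1}\leq\csum(\qshift(\qs_i\alpha))_{i+1}$, and since $\gamma_{i+1}=0$ this follows immediately from the bound at $j=i$ together with $w_{i+1}\geq 0$. This is exactly how the paper proceeds: it shows $v_j=w_j$ for $j\notin\{i,i+1,i+2\}$ and $v_i+v_{i+1}=w_i$ (via a short case split on whether $i-1\in S_\alpha$), then handles $j\notin\{i,i+1\}$, $j=i$, and $j=i+1$ by three separate one-line chains, the last of which uses precisely $\gamma_{i+1}=0$. So your sub-lemma should be weakened to $\csum(w)_j=\csum(v)_j$ for $j\notin\{i,i+1\}$ together with $\csum(w)_i=\csum(v)_{i+1}$, and the case $j=i+1$ dispatched separately.
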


\begin{proof} Let $\alpha,i,\beta,$ and $\gamma$ be as above.  The statement is clearly true if $\alpha_{i+1}=0$, so assume not. Let $v=\qshift(\alpha)$ and $w=\qshift(\qs_i\alpha)$.  Denote by $S_\alpha$ the set of indices of nonzero entries in $\alpha$. Note that for all $j \notin \{i,i+1,i+2\}$, $v_j=w_j$. If $i-1 \in S_\alpha$, then $v_i=\alpha_{i+1}-1$, $w_i=\alpha_{i+1}$ and $v_{i+1}=1$.  If $i-1 \notin S_\alpha$, then $v_i = 0, w_i=1$, and $v_{i+1}=1$. Note that in both cases, $v_i+v_{i+1}=w_i$.

Thus, for $j\notin\{i,i+1\}$ we have 
\begin{align*}
\alpha_1+\cdots +\alpha_j &\leq \beta_1+\cdots+ \beta_j \\
&=\gamma_1+\cdots+\gamma_j\\
&\leq v_1+\cdots +v_j\\
&= w_1+\cdots + w_j.
\end{align*}
Since $\alpha_i=0$, 
\begin{align*}
\alpha_1+\cdots+\alpha_{i-1}+\alpha_{i+1} &=\alpha_1+\cdots +\alpha_{i+1}\\
&\leq \beta_1+\cdots +\beta_{i+1}\\
&=\gamma_1+\cdots +\gamma_i\\
&\leq v_1+\cdots + v_{i+1}\\
&=w_1+\cdots +w_i.\end{align*}
Finally, 
\begin{align*}
\alpha_1+\cdots +\alpha_{i+1}&\leq \gamma_1+\cdots +\gamma_i \\
&=\gamma_1+\cdots + \gamma_{i+1}\\
&\leq w_1+\cdots +w_i\\
&\leq w_1+\cdots + w_{i+1}.
\end{align*}
Therefore $\qs_i\alpha\doml\gamma\doml\qshift(\qs_i\alpha)$.
\end{proof}

\begin{lemma}\label{lem:siqshift2}
Let $\alpha$ be a weak composition and $i \in \{1,\ldots,\ell(\alpha)\}$ with $\alpha_i=0$.  Let $\gamma$ be a weak composition such that $\qs_i\alpha\doml\gamma\doml\qshift(\qs_i\alpha)$ and $\qbarpi_ix^\gamma \neq 0$.  Then any composition $\beta$ with $\beta_j =\gamma_j$ for $j\notin\{i,i+1\}$, $0\leq \beta_i \leq \gamma_i-1$, and $\beta_{i+1}=\gamma_i - \beta_i$ satisfies $\alpha\doml\beta\doml\qshift(\alpha)$. 
\end{lemma}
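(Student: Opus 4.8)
The plan is to mirror the structure of the proof of Lemma~\ref{lem:siqshift1}, since Lemma~\ref{lem:siqshift2} is essentially its converse: there we started from a $\beta$ between $\alpha$ and $\qshift(\alpha)$ and built a valid $\gamma$; now we start from a valid $\gamma$ and must check that \emph{every} $\beta$ obtained by splitting the $i$-th part $\gamma_i$ into $(\beta_i,\beta_{i+1})$ with $\beta_i+\beta_{i+1}=\gamma_i$ lands between $\alpha$ and $\qshift(\alpha)$ in dominance order. First I would fix notation exactly as before: write $v=\qshift(\alpha)$, $w=\qshift(\qs_i\alpha)$, and $S_\alpha$ for the support of $\alpha$; note $\alpha_i=0$. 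The hypothesis $\qbarpi_i x^\gamma\neq 0$ together with Lemma~\ref{lem:pibar} forces $\gamma_i>0$ and $\gamma_{i+1}=0$ (the only case where $\qbarpi_i$ acting on $x^\gamma$ gives a nonzero positive-sign term and a legitimate split is possible), so the candidate $\beta$'s are exactly $(\ldots,\beta_i,\gamma_i-\beta_i,\ldots)$ with $0\le \beta_i\le \gamma_i-1$; I would record this at the start.

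The core of the argument is then a cumulative-sum comparison at each index $j$, split into the three ranges $j\notin\{i,i+1\}$, $j=i$, and $j=i+1$, exactly as in Lemma~\ref{lem:siqshift1}. For $j\notin\{i,i+1\}$ we have $\beta_j=\gamma_j$ and the partial sums of $\beta$ and $\gamma$ agree, so $\csum(\qs_i\alpha)_j\le\csum(\gamma)_j=\csum(\beta)_j\le\csum(w)_j$; and since $\alpha$ and $\qs_i\alpha$ have the same entries outside $\{i,i+1\}$ (indeed $\alpha_i=0$ means $\qs_i$ just swaps the $0$ with $\alpha_{i+1}$), and $v$ and $w$ agree outside $\{i,i+1,i+2\}$ with a computable relationship inside, these translate into the desired bounds $\csum(\alpha)_j\le\csum(\beta)_j\le\csum(v)_j$. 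The one place care is needed is $j=i$ and $j=i+1$: here I would use $\beta_i\le\gamma_i-1$ to get the upper bound at $j=i$ (so $\csum(\beta)_i=\csum(\gamma)_{i-1}+\beta_i\le\csum(\gamma)_{i-1}+\gamma_i-1$, and then compare to $\csum(v)_i$), and the relation $\beta_i+\beta_{i+1}=\gamma_i$ to get $\csum(\beta)_{i+1}=\csum(\gamma)_i$ for the bound at $j=i+1$. For the lower bounds, $\csum(\alpha)_i=\csum(\alpha)_{i-1}$ since $\alpha_i=0$, and $\csum(\alpha)_{i+1}=\csum(\qs_i\alpha)_{i+1}$, which reduces everything to inequalities already available.

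The main obstacle I anticipate is the $j=i$ case of the \emph{upper} bound, i.e.\ showing $\csum(\beta)_i\le\csum(v)_i$. The quantity $\csum(\beta)_i$ can be as large as $\csum(\gamma)_{i-1}+\gamma_i-1$, and I need this to be at most $v_1+\cdots+v_i$; here I must carefully use the hypothesis $\gamma\doml\qshift(\qs_i\alpha)$ (giving $\csum(\gamma)_{i+1}\le\csum(w)_{i+1}$, hence with $\gamma_{i+1}=0$, $\csum(\gamma)_i\le\csum(w)_{i+1}$) together with the explicit comparison between $v$ and $w$ near position $i$. As computed in the proof of Lemma~\ref{lem:siqshift1}, depending on whether $i-1\in S_\alpha$ one has either $v_i=\alpha_{i+1}-1,\ w_i=\alpha_{i+1},\ v_{i+1}=1$ or $v_i=0,\ w_i=1,\ v_{i+1}=1$, and in both cases $v_i+v_{i+1}=w_i$ and $v_j=w_j$ for $j<i$; chaining $\csum(\beta)_i\le\csum(\gamma)_i\le\csum(w)_{i+1}=\csum(w)_{i-1}+w_i+w_{i+1}$ and matching terms against $\csum(v)_i=\csum(v)_{i-1}+v_i$ will require tracking the $w_{i+1}$ term and the $-1$ from $\beta_i\le\gamma_i-1$ against the off-by-one in $v_i$ versus $w_i$. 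Once that bookkeeping is done, the remaining cases are routine, and the conclusion $\alpha\doml\beta\doml\qshift(\alpha)$ follows.
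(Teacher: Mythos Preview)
Your overall plan is exactly the paper's: deduce $\gamma_{i+1}=0$, set $v=\qshift(\alpha)$ and $w=\qshift(\qs_i\alpha)$, and verify the dominance inequalities position by position using $v_i+v_{i+1}=w_i$ and $v_{i+1}=1$ from the proof of Lemma~\ref{lem:siqshift1}. For $j\notin\{i,i+1\}$ and for $j=i+1$ your sketch is fine and matches the paper.

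The one place your plan goes astray is precisely the spot you flagged as the obstacle, the upper bound at $j=i$. You propose to pass through position $i+1$, using $\gamma_{i+1}=0$ to get $\csum(\gamma)_i\le\csum(w)_{i+1}$ and then ``track the $w_{i+1}$ term'' against the $-1$. That bookkeeping does not close: you obtain
\[
\csum(\beta)_i\le \csum(\gamma)_i-1\le \csum(w)_{i+1}-1=\csum(w)_{i-1}+w_i+w_{i+1}-1,
\]
while $\csum(v)_i=\csum(w)_{i-1}+w_i-1$; the surplus $w_{i+1}$ is in general positive (e.g.\ when $\alpha_{i+2}>0$), so the inequality fails. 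The fix is much simpler than the detour: the hypothesis $\gamma\doml w$ already gives $\csum(\gamma)_i\le\csum(w)_i$ directly at position $i$, and then
\[
\csum(\beta)_i\le\csum(\gamma)_i-1\le\csum(w)_i-1=\csum(v)_i,
\]
which is exactly the paper's chain. So drop the passage through $\csum(w)_{i+1}$ and use the dominance bound at index $i$ itself.
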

\begin{proof} Let $\alpha, i, \gamma,$ and $\beta$ be as above.  Since $\qbarpi_i x^\gamma \neq 0$ we have $\gamma_{i+1} = 0$.  Let $v=\qshift(\alpha)$ and $w=\qshift(\qs_i\alpha)$.  Then 
\begin{align*}
\alpha_1+\cdots +\alpha_{i-1}+\alpha_{i+1}&=\alpha_1+\cdots + \alpha_{i+1}\\
&\leq \gamma_1+\cdots +\gamma_i\\
&=\beta_1+\cdots + \beta_{i+1}\\
&\leq w_1+\cdots +w_i\\
&=v_1+\cdots + v_{i+1}
\end{align*}
and 
\begin{align*}
\alpha_1+\cdots + \alpha_i &\leq \gamma_1+\cdots + \gamma_{i-1}\\
&\leq \beta_1+\cdots +\beta_i\\
&\leq \gamma_1+\cdots + \gamma_i - 1\\
&\leq w_1+\cdots +w_i - 1\\
&=v_1+\cdots +v_i.
\end{align*}
Thus $\alpha \doml\beta\doml\qshift(\alpha)$.
\end{proof}

Now we are ready to give an explicit characterization of the quasisymmetric Demazure atoms:
\begin{theorem}\label{thm:qatom}
Let $\alpha$ be a weak composition.  Then 
\[\qatom_\alpha (x_i,x_2,\cdots,x_n) = \sum_{\alpha\doml\beta\doml\qshift(\alpha)} x^\beta.\]
\end{theorem}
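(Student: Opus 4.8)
The plan is to induct on the length of the minimal permutation $w_\alpha$ taking $\alpha$ to $\strong(\alpha)$, using Lemma~\ref{lem:qatomrec} to peel off one operator $\qbarpi_i$ at a time. The base case is $\alpha = \strong(\alpha)$: here $w_\alpha = 1$, so $\qatom_\alpha = x^\alpha$, and we must check that $\alpha \doml \beta \doml \qshift(\alpha)$ forces $\beta = \alpha$. When $\alpha$ is already a strong composition (all zeros at the right), $\qshift(\alpha)$ collapses every positive run to a single leading $1$ with the rest shifted left; in fact $\csum(\alpha)_j = \csum(\qshift(\alpha))_j$ whenever $j$ is the last index of a nonzero run or $j$ is past the support, while in between one has $\csum(\qshift(\alpha))_j \le \csum(\alpha)_j$. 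So the sandwich $\csum(\alpha)_j \le \csum(\beta)_j \le \csum(\qshift(\alpha))_j$ forces equality of cumulative sums at every coordinate, hence $\beta = \alpha$, and the right-hand side is the single monomial $x^\alpha$ as desired.

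For the inductive step, suppose $w_\alpha$ has positive length. Choose $i$ with $\alpha_i = 0$, $\alpha_{i+1} \neq 0$, and $w_{s_i(\alpha)}$ of length one less (such an $i$ exists because a minimal-length rearrangement that moves zeros rightward must, at some adjacent pair scanned appropriately, have a $0$ immediately left of a positive entry). By Lemma~\ref{lem:qatomrec}, $\qatom_\alpha = \qbarpi_i \qatom_{s_i(\alpha)}$, and by induction $\qatom_{s_i(\alpha)} = \sum_{s_i(\alpha) \doml \gamma \doml \qshift(s_i(\alpha))} x^\gamma$. Applying $\qbarpi_i$ term by term via Lemma~\ref{lem:pibar}: each surviving $\gamma$ in the sum has $\gamma_i = 0$ (since $\qbarpi_i x^\gamma$ vanishes unless $\gamma_i = 0, \gamma_{i+1} > 0$ — I should note $\gamma_i$ is forced to $0$ rather than $\gamma_{i+1}$ because of the dominance bound $\gamma \doml \qshift(s_i(\alpha))$ combined with $s_i(\alpha) \doml \gamma$), and $\qbarpi_i x^\gamma = -\sum_{j=0}^{\gamma_{i+1}-1} x^{(\ldots, j, \gamma_{i+1}-j, \ldots)}$. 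Relabeling, each output monomial $x^\beta$ has $\beta_j = \gamma_j$ off $\{i,i+1\}$, $\beta_i + \beta_{i+1} = \gamma_{i+1}$, with $0 \le \beta_i \le \gamma_{i+1}-1$ — wait, I need to be careful with the sign and with the exact shape of the indexing; the point is that the map $\gamma \mapsto \{\beta\}$ exactly matches the hypotheses of Lemma~\ref{lem:siqshift2} (reading $s_i(\alpha)$ in place of its "$\alpha$", and noting $s_i(s_i(\alpha)) = \alpha$), which tells us every such $\beta$ satisfies $\alpha \doml \beta \doml \qshift(\alpha)$.

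Conversely, Lemma~\ref{lem:siqshift1} (again applied with the roles set up so that its "$\alpha$" is our $s_i(\alpha)$, its "$\qs_i\alpha$" is our $\alpha$) shows every $\beta$ with $\alpha \doml \beta \doml \qshift(\alpha)$ arises from a valid $\gamma$ in the inductive sum, namely $\gamma_i = 0$, $\gamma_{i+1} = \beta_i + \beta_{i+1}$, $\gamma_j = \beta_j$ otherwise. So the two lemmas together establish that the term-by-term application of $\qbarpi_i$ produces exactly the monomials indexed by $\{\beta : \alpha \doml \beta \doml \qshift(\alpha)\}$. The remaining point is coefficients: each such $\beta$ must arise \emph{exactly once}, with coefficient $+1$ (the two minus signs — one from $\qbarpi_i$ and one from... actually only $\qbarpi_i$ contributes a sign, so I must make sure the inductive coefficients are all $+1$ and the single sign from $\qbarpi_i$ is absorbed). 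The cleanest fix: re-examine Lemma~\ref{lem:pibar} — the $\alpha_i = 0, \alpha_{i+1} > 0$ branch carries a global $-1$, so to keep coefficients positive one should verify the recursion actually only ever uses the \emph{other} surviving branch, or track an overall sign that telescopes to $+1$ by a parity argument tied to $\ell(w_\alpha)$. I expect this bookkeeping of signs and the verification that the map $\gamma \leftrightarrow \beta$ is a bijection (not just surjective each way) to be the main obstacle; the dominance inequalities themselves are routine given the three preparatory lemmas.
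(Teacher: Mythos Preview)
Your strategy matches the paper's: induct on $\ell(w_\alpha)$, peel off one $\qbarpi_i$ via Lemma~\ref{lem:qatomrec}, and invoke Lemmas~\ref{lem:siqshift1} and~\ref{lem:siqshift2} to control how the dominance interval transforms. The gap is in the inductive step, and it is exactly what produces the sign problem you cannot resolve at the end.

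You assert that the monomials $x^\gamma$ appearing in $\qatom_{s_i\alpha}$ which survive under $\qbarpi_i$ have $\gamma_i = 0$ and $\gamma_{i+1} > 0$, hence fall into the negative branch of Lemma~\ref{lem:pibar}. The opposite is true. Since $(s_i\alpha)_i = \alpha_{i+1} > 0$, position $i$ lies in the support of $s_i\alpha$, and one checks from Definition~\ref{def:qshift} that $\csum(\qshift(\mu))_j = \csum(\mu)_j$ whenever $j$ is in the support of $\mu$. The sandwich $s_i\alpha \doml \gamma \doml \qshift(s_i\alpha)$ therefore pins $\csum(\gamma)_i = \csum(s_i\alpha)_i$, while $\csum(\gamma)_{i-1} \le \csum(\qshift(s_i\alpha))_{i-1} \le \csum(s_i\alpha)_i - 1$, the last inequality because $\qshift$ deposits at most $(s_i\alpha)_i - 1$ units of mass strictly left of position $i$. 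Hence $\gamma_i \ge 1$ for every $\gamma$ in the inductive sum. The surviving $\gamma$ thus have $\gamma_i > 0$ and $\gamma_{i+1} = 0$, and it is the \emph{positive} branch of Lemma~\ref{lem:pibar} that applies. There is no sign to track and no parity argument is needed.

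This also repairs your application of the two interval lemmas. Both Lemma~\ref{lem:siqshift1} and Lemma~\ref{lem:siqshift2} hypothesize a composition with a zero in position $i$; their ``$\alpha$'' must be your $\alpha$ (which has $\alpha_i = 0$), not $s_i\alpha$ (which has $(s_i\alpha)_i > 0$), so the role-swap you propose is not licensed by their hypotheses. Applied with the correct roles, Lemma~\ref{lem:siqshift1} shows that every target $\beta$ with $\alpha\doml\beta\doml\qshift(\alpha)$ arises from the unique $\gamma$ with $\gamma_i = \beta_i + \beta_{i+1}$, $\gamma_{i+1} = 0$, and Lemma~\ref{lem:siqshift2} shows that every $\beta$ produced from a surviving $\gamma$ lands in the desired interval. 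This is the bijection with all coefficients equal to $+1$, and it is exactly how the paper closes the induction.
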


\begin{proof}
Let $\alpha$ be a weak composition of length $n$ and $w_\alpha \in S_n$ a minimum length permutation such that $\strong(\alpha) = w_\alpha(\alpha)$.  We proceed by induction on $\ell(w_\alpha)$.

Suppose $w_\alpha = s_i$ for some $i$.  Then $\strong(\alpha)=s_i(\alpha)$, so all the zeros in $\alpha$ are at the end after a single exchange, and thus $\alpha_i = 0$, $\alpha_{i+1}\neq 0$, $\alpha_j\neq 0$ for any $1\leq j<i$, and $\alpha_{i+j}=0$ for any $0<j\leq n-i$.  Thus $$\qshift(\alpha) = (\alpha_1,\ldots,\alpha_{i-1},\alpha_{i+1}-1,1,0,\ldots).$$  Lemma~\ref{lem:pibar} shows $x^\beta$ occurs as a monomial (with nonzero coefficient) in $\qbarpi_i x^{\strong(\alpha)}$ iff $\alpha\doml\beta\doml\qshift(\alpha)$.

Assume it's true for any $\alpha$ such that $\ell(w_\alpha)<k$.  Now suppose $\ell(w_\alpha) = k$ with $\ell(w_\alpha s_i)=\ell(w_\alpha)-1$ for some $i$.  Then $\alpha_i = 0$ and $\alpha_{i+1}\neq 0$ and by Lemma~\ref{lem:qatomrec}, $\qatom_\alpha = \qbarpi_i\qatom_{\qs_i\alpha}$.  Let $\beta$ be a weak composition such that $\alpha\doml\beta\doml\qshift(\alpha)$.  Define $\gamma$ by $\gamma_j = \beta_k$ for $j\notin\{i,i+1\}$, $\gamma_i = \beta_i+\beta_{i+1}$, and $\gamma_{i+1} = 0$.  Then by Lemma~\ref{lem:siqshift1} $\qs_i\alpha\doml \gamma \doml\qshift(\qs_i\alpha)$ and $x^\gamma$ occurs as a monomial with coefficient 1 in $\qatom_{\qs_i\alpha}$ by the inductive hypothesis.  By Lemma~\ref{lem:pibar} $\qbarpi_i x^\gamma$ contains the monomial $x^\beta$ with coefficient 1. 

Conversely, let $x^\beta$ occur as monomial with nonzero coefficient in $\qatom_\alpha$.  Then $x^\beta$ occurs with nonzero coefficient in $\qbarpi_i x^\gamma$ for (one or more) $\gamma$, $\qs_i\alpha\doml\gamma\doml\qshift(\qs_i\alpha)$.  Thus, by Lemma~\ref{lem:pibar}, we have that $\beta_j = \gamma_j$ for $j \notin\{i,i+1\}$, $0\leq \beta_i\leq \gamma_i - 1$, and $\beta_{i+1} = \gamma_i - \beta_i$.  Thus $\gamma$ is uniquely determined and  Lemma~\ref{lem:siqshift2} $\alpha\doml\beta\doml\qshift(\alpha)$.  Therefore, \[\qatom_\alpha = \sum_{\alpha\doml\beta\doml\qshift(\alpha)}x^\beta.\]
\end{proof}

\begin{lemma}\label{lem:qatombasis}
The set $\{\qatom_\alpha\}$ is a basis for $\mathbb{C}[X]$.
\end{lemma}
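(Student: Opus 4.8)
The plan is to read the change of basis between $\{\qatom_\alpha\}$ and the monomial basis $\{x^\beta\}$ straight off Theorem~\ref{thm:qatom} and observe that it is triangular with ones on the diagonal. Since both families are indexed by weak compositions (of length $n$), and since each $\qatom_\alpha$ is homogeneous of degree $|\alpha|$ — because $\strong$ preserves the sum of the entries and, by Lemma~\ref{lem:pibar}, each $\qbarpi_i$ sends a monomial to a signed sum of monomials of the same degree — it suffices to show that, in each fixed degree $d$, the finite matrix expressing $\{\qatom_\alpha : |\alpha|=d\}$ in terms of $\{x^\beta : |\beta|=d\}$ is invertible; taking the union over all $d$ then gives that $\{\qatom_\alpha\}$ spans and is independent in $\nonsym{n}$.

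Next I would record two elementary facts about the fundamental shift. By the ``move all the surplus mass leftward'' description following Definition~\ref{def:qshift}, $\qshift(\alpha)$ has the same total sum as $\alpha$ and satisfies $\csum(\alpha)_i \le \csum(\qshift(\alpha))_i$ for every $i$; that is, $\alpha \doml \qshift(\alpha)$ (if one prefers not to rely on the informal description, this is checked directly from the four cases of the definition). Together with the trivial $\alpha \doml \alpha$, this shows that $\alpha$ itself is among the indices in the sum of Theorem~\ref{thm:qatom}, so the coefficient of $x^\alpha$ in $\qatom_\alpha$ is $1$. Moreover, every other weak composition $\beta \neq \alpha$ occurring in that sum satisfies $\alpha \doml \beta$, hence, since dominance is a partial order, $\beta \domrnoneq \alpha$: every other monomial of $\qatom_\alpha$ is indexed strictly above $\alpha$ in dominance order.

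I would then finish with the triangularity argument. Fix $d$ and pick any linear extension $\preceq$ of the dominance order on the finitely many weak compositions of $d$ of length $n$. By the previous paragraph, $\qatom_\alpha = x^\alpha + \sum_{\beta \succ \alpha} c_{\alpha\beta}\, x^\beta$ for scalars $c_{\alpha\beta}$, so the transition matrix from $\{\qatom_\alpha\}$ to $\{x^\beta\}$, with rows and columns listed in $\preceq$-order, is upper unitriangular and hence invertible. Therefore $\{\qatom_\alpha : |\alpha|=d\}$ is a basis for the degree-$d$ homogeneous component of $\nonsym{n}$, and assembling these across all $d$ yields that $\{\qatom_\alpha\}$ is a basis for $\nonsym{n}$.

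I do not expect a genuine obstacle: the content is entirely carried by Theorem~\ref{thm:qatom}, and the only point requiring any care is the verification that $\qshift$ weakly increases every cumulative sum while preserving the total, which is immediate from Definition~\ref{def:qshift}.
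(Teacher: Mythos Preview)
Your argument is correct. The paper in fact states Lemma~\ref{lem:qatombasis} without any proof, presumably because the unitriangularity with respect to dominance order is immediate from the monomial expansion in Theorem~\ref{thm:qatom}; your write-up makes that deduction explicit and is the natural argument one would supply.
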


\begin{corollary}
For every $\alpha\models_w n$, $\fparta=\qatom_\alpha$, where $\fparta$ is the fundamental particle basis introduced by Searles in \cite{Sea2020}.
\end{corollary}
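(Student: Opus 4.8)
The plan is to match the explicit monomial expansion of $\qatom_\alpha$ obtained in Theorem~\ref{thm:qatom}, namely
\[\qatom_\alpha(x_1,\ldots,x_n) = \sum_{\alpha\doml\beta\doml\qshift(\alpha)} x^\beta,\]
against Searles' original definition of the fundamental particle polynomial $\fparta$ in~\cite{Sea2020}. First I would restate, in the notation of the present paper, Searles' description of $\fparta$ as a generating function $\sum_\beta x^\beta$ over a combinatorially defined family of weak compositions (his ``particle'' configurations), being careful about the dictionary between his conventions and ours: trailing zeros, left-justification, the direction of the dominance inequality, and which entry is being ``loaded.'' Once that translation is fixed, the corollary reduces to a purely combinatorial identity of index sets,
\[\{\beta : \beta \text{ is a particle configuration for }\alpha\} = \{\beta : \alpha\doml\beta\doml\qshift(\alpha)\},\]
after which equality of the two polynomials is immediate since both are multiplicity-free sums of monomials over the same set.

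To prove this set identity I would argue by two inclusions. For ``$\subseteq$'': every elementary particle move only shifts mass to the left, hence weakly increases each partial sum $\csum(\cdot)_i$, so any $\beta$ reachable from $\alpha$ satisfies $\alpha\doml\beta$; and the upper bound is exactly what Definition~\ref{def:qshift} records, since $\qshift(\alpha)$ is the ``maximally left-loaded'' outcome (reduce each positive entry lying next to a $0$ to $1$ and push the surplus left to the next nonzero block), so no legal sequence of moves can push mass further left, giving $\beta\doml\qshift(\alpha)$. For ``$\supseteq$'': given $\beta$ with $\alpha\doml\beta\doml\qshift(\alpha)$, I would construct an explicit sequence of particle moves realizing $\beta$ by processing the maximal nonzero blocks of $\alpha$ from right to left; at each stage $\alpha\doml\beta$ guarantees enough mass is available to the right and $\beta\doml\qshift(\alpha)$ guarantees we are not asked to move more than is legal, so the required move always exists. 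With the supports matched and all coefficients equal to $1$, Theorem~\ref{thm:qatom} yields $\qatom_\alpha=\fparta$; the basis assertion is then Lemma~\ref{lem:qatombasis} (and is in any case already known for $\fparta$ from~\cite{Sea2020}).

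The step I expect to be the main obstacle is the translation step together with the ``$\supseteq$'' direction: Searles presents the particle basis through fillings of diagrams rather than through dominance intervals, so one must verify the correspondence is the intended one and, more substantively, that the right-to-left block construction produces \emph{every} $\beta$ in the dominance interval and not just a proper subset. This is where a careful induction --- on the number of nonzero blocks of $\alpha$, or on $\sum_i\bigl(\csum(\qshift(\alpha))_i-\csum(\beta)_i\bigr)$ --- seems unavoidable; the ``$\subseteq$'' direction and the final assembly should be routine once the statement is set up correctly. If matching Searles' combinatorics directly turns out to be awkward, a fallback is to instead check that $\qatom_\alpha$ satisfies the same characterizing recursion \cite{Sea2020} uses to pin down $\fparta$, using Lemma~\ref{lem:qatomrec} and Lemma~\ref{lem:pibar}; but I would attempt the direct support-matching first.
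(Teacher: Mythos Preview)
Your proposal is correct, but it does considerably more work than the paper. The paper offers no proof at all: immediately after the corollary it simply remarks that Searles' construction ``mimics the right hand side of Theorem~\ref{thm:qatom}.'' In other words, the authors take the position that Searles' defining formula for $\fparta$ \emph{is} the monomial sum $\sum_{\alpha\doml\beta\doml\qshift(\alpha)} x^\beta$, so once Theorem~\ref{thm:qatom} is established the corollary is a one-line identification, not a theorem requiring an argument.

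Your plan, by contrast, treats the match between Searles' indexing set and the dominance interval $[\alpha,\qshift(\alpha)]$ as a genuine combinatorial statement to be proved by two inclusions, with an induction for the ``$\supseteq$'' direction. That is a perfectly valid and more self-contained route, and it would be the right thing to do if Searles' definition were phrased in terms of diagram fillings or sequences of moves rather than directly as a dominance condition. But for this paper's purposes the authors regard the translation as trivial; they are content to point at \cite{Sea2020} and assert that the two monomial supports coincide by inspection. So your approach buys rigor and independence from \cite{Sea2020}'s precise phrasing, at the cost of effort the paper does not spend. Your fallback idea---matching the recursion via Lemma~\ref{lem:qatomrec} and Lemma~\ref{lem:pibar}---is also sound and is in some sense closer in spirit to how the paper derives everything else, but again is more than the paper itself provides here.
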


In particular, Seales construction mimics the right hand side of Theorem \ref{thm:qatom}.  Again, we'd suggest that from this perspective, a more natural name would be the ``Fundamental atom polynomials'' as we chose to use here.  Since this name is not taken elsewhere in the literature, we continue to use it throughout.

\begin{theorem}
Let $\alpha$ be a strong composition of $n$.  Then 
\[F_\alpha(x_1,\ldots,x_n) = \sum_{\substack{\beta \wdash n, \ell(\beta) = n\\ \strong(\beta) = \alpha}}\qatom_\beta.\]
\end{theorem}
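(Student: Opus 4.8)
The plan is to compare explicit monomial expansions of the two sides. Theorem~\ref{thm:qatom} already expresses each $\qatom_\beta$ as $\sum_{\beta\doml\delta\doml\qshift(\beta)}x^\delta$. For the left-hand side, expanding Gessel's $F_\alpha$ over its defining weakly increasing index sequences (or, equivalently, invoking $F_\alpha=\qslide_{(0^{n-\ell(\alpha)},\alpha)}$ and Theorem~\ref{thm:slide}) and recording exponent vectors gives
\[
F_\alpha(x_1,\ldots,x_n)=\sum_{\substack{\gamma\wdash n,\ \ell(\gamma)=n\\ \strong(\gamma)\succcurlyeq\alpha}}x^\gamma,
\]
the point being that the strict ascents of the word $1^{\gamma_1}2^{\gamma_2}\cdots n^{\gamma_n}$ occur precisely at the positions of $\operatorname{set}(\strong(\gamma))$, so forcing ascents at the positions of $\operatorname{set}(\alpha)$ is the same as requiring $\operatorname{set}(\alpha)\subseteq\operatorname{set}(\strong(\gamma))$, i.e.\ $\strong(\gamma)\succcurlyeq\alpha$. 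It therefore suffices to show that for each length-$n$ weak composition $\gamma$ of $n$ there is exactly one $\beta$ with $\strong(\beta)=\alpha$ and $\beta\doml\gamma\doml\qshift(\beta)$ when $\strong(\gamma)\succcurlyeq\alpha$, and no such $\beta$ otherwise.

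The key observation concerns the cumulative sums of $\beta$ and of $\qshift(\beta)$ at the support of $\beta$. If $\beta$ has its nonzero entries in positions $p_1<\cdots<p_k$ with $\beta_{p_j}=\alpha_j$, then $\csum(\beta)_{p_j}=\csum(\alpha)_j$; and since $\qshift$ moves content only to the left and (reading off Definition~\ref{def:qshift}) never moves the content at a position $p_{j'}$ from one side of $p_j$ to the other, we also get $\csum(\qshift(\beta))_{p_j}=\csum(\alpha)_j$. Consequently any $\beta$ with $\strong(\beta)=\alpha$ and $\beta\doml\gamma\doml\qshift(\beta)$ satisfies $\csum(\gamma)_{p_j}=\csum(\alpha)_j$ for all $j$. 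Since $\csum(\gamma)$ is nondecreasing and $\csum(\alpha)_j<n$ for $j<k$ (as $\alpha$ is a \emph{strong} composition), the value $\csum(\alpha)_j$ is first attained by $\csum(\gamma)$ at a position where $\gamma$ is nonzero, hence is a proper partial sum of $\strong(\gamma)$; thus $\operatorname{set}(\alpha)\subseteq\operatorname{set}(\strong(\gamma))$, settling the case $\strong(\gamma)\not\succcurlyeq\alpha$.

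For the case $\strong(\gamma)\succcurlyeq\alpha$, let $a_j$ be the first position at which $\csum(\gamma)$ attains $\csum(\alpha)_j$, which is well defined by the refinement hypothesis, with $a_1<\cdots<a_k$. I would then write down the full cumulative-sum profiles: $\csum(\beta)$ is the constant $\csum(\alpha)_{j-1}$ on $p_{j-1}\le i<p_j$, and (from Definition~\ref{def:qshift}, which leaves a $1$ at $p_j$ and deposits the remaining $\alpha_j-1$ just to the right of $p_{j-1}$) $\csum(\qshift(\beta))$ equals $\csum(\alpha)_j-1$ on $p_{j-1}<i<p_j$ and $\csum(\alpha)_j$ at $i=p_j$. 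Substituting into $\beta\doml\gamma\doml\qshift(\beta)$: the inequalities at $i=p_j$ force $\csum(\gamma)_{p_j}=\csum(\alpha)_j$, hence $p_j\ge a_j$; the inequalities on the gap $p_{j-1}<i<p_j$ force $\csum(\gamma)_i\le\csum(\alpha)_j-1$, hence $p_j-1<a_j$. Together these give $p_j=a_j$ for every $j$, so $\beta$ is uniquely determined, and a routine check that the two profiles do lie in the required relations with $\csum(\gamma)$ confirms that the $\beta$ with support $\{a_1,\ldots,a_k\}$ and $\beta_{a_j}=\alpha_j$ really does satisfy $\beta\doml\gamma\doml\qshift(\beta)$. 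Summing $\qatom_\beta$ over all $\beta$ with $\strong(\beta)=\alpha$ then reproduces the monomial expansion of $F_\alpha$.

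The step I expect to demand the most care is pinning down the two cumulative-sum profiles of $\qshift(\beta)$: one must handle separately the left boundary (content that would be shifted past position $1$ collects at position $1$) and the case of adjacent support positions $p_{j-1}=p_j-1$, where no content is shifted. Once those profiles are established, the collapse of the dominance interval to the single choice $p_j=a_j$ is forced and existence is an immediate verification.
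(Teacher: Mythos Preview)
Your argument is correct. You compare monomial expansions: using Theorem~\ref{thm:qatom} for each $\qatom_\beta$ and the standard monomial expansion of $F_\alpha$, you show that each $x^\gamma$ with $\strong(\gamma)\succcurlyeq\alpha$ lies in exactly one interval $[\beta,\qshift(\beta)]$ with $\strong(\beta)=\alpha$, by pinning down the support of $\beta$ as the first positions where $\csum(\gamma)$ attains the successive values $\csum(\alpha)_j$. The cumulative-sum profiles you write down for $\beta$ and $\qshift(\beta)$ are correct, and the boundary cases you flag (content collecting at position $1$, adjacent support positions) are handled consistently by those profiles.

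This is a genuinely different route from the paper's proof. The paper works at the operator level: it quotes Hivert's identity $F_\alpha=\sum_{\sigma\in S_n}\qbarpi_\sigma x^\alpha$, observes via Lemma~\ref{lem:pibar} that $\qbarpi_\sigma x^\alpha=0$ unless $\sigma$ is the unique minimal-length permutation with $\tilde{\sigma}(\alpha)=\beta$, and then recognises each surviving summand as $\qatom_\beta$ directly from Definition~\ref{def:qatom}. That argument is shorter and never touches a single monomial, but it imports Hivert's formula as a black box. Your approach is more combinatorial and self-contained within the paper: it relies only on Theorem~\ref{thm:qatom} (which the paper proves) and the elementary monomial description of $F_\alpha$, and it gives an explicit rule for which $\beta$ contributes a given monomial. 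Either approach is acceptable; the paper's is quicker given the machinery already in place, while yours makes the partition of the monomial support of $F_\alpha$ into dominance intervals transparent.
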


\begin{proof}
Recall~\cite{Hiv2000} that for $\alpha$ a strong composition of $n$, 
\begin{equation}\label{eqn:Fpibar}
F_\alpha(x_1,\ldots,x_n)=\sum_{\sigma\in S_n} \qbarpi_\sigma x^\alpha.
\end{equation}

Let $\sigma \in S_n$ and $\beta = \tilde{\sigma}(\alpha)$.  Then $\beta \wdash n$ and $\ell(\beta) = n$, tacking on trailing zeros if necessary.  Suppose $\sigma = s_{i_1}\cdots s_{i_k}$.  By Lemma~\ref{lem:pibar}, if $s_{i_j}\cdots s_{i_k}(\alpha) = s_{i_{j+1}}\cdots s_{i_k}(\alpha)$, then $\qbarpi_\sigma(\alpha) = 0$.  Thus, in~\eqref{eqn:Fpibar}, the only non-zero summands are those such that $\sigma(\alpha) = \beta$ where $\sigma$ is the minimal length permutation such that $\sigma(\alpha)=\beta$ under the quasisymmetric action.  Thus, 
\begin{align*}
F_\alpha(x_1,\ldots,x_n) & = \sum_{\sigma \in S_n} \qbarpi_\sigma x^\alpha\\
&=\sum_{\substack{\beta\wdash n, \ell(\beta) = n\\\strong(\alpha)=\beta}} \sum_{\substack{\sigma \in S_n\\ \tilde{\sigma}(\alpha) = \beta}} \qbarpi_\sigma x^\alpha \\
&=\sum_{\substack{\beta \wdash n, \ell(\beta) = n\\ \strong(\beta) = \alpha}}\qatom_\beta.
\end{align*}
\end{proof}

The familiar reader will recognize that this is analogous to how the Schur basis is expressed as a positive sum of quasisymmetric Schur basis, as determined by Haglund, Luoto, Mason, and van Willigenburg in \cite{HLMvW2011}.

In \cite{Sea2020}, Searles shows that the fundamental slide polynomials are a positive sum of the fundamental atoms:

\begin{theorem}[Searles] Let $\alpha$ be a weak composition.  Then $$\qk_{\alpha}=\sum_{\substack{\beta \succcurlyeq\alpha\\\operatorname{flat} (\alpha)=\operatorname{flat}(\beta)}} \qatom_{\beta}$$
    
\end{theorem}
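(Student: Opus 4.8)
The plan is to compare monomial expansions. By Hivert's formula, Theorem~\ref{thm:slide}, the left side $\qslide_\alpha$ is $\sum_\delta x^\delta$ over weak compositions $\delta$ with $\delta\domr\alpha$ and $\strong(\delta)\succcurlyeq\strong(\alpha)$, while by Theorem~\ref{thm:qatom} each summand on the right is $\qatom_\beta=\sum_{\beta\doml\delta\doml\qshift(\beta)}x^\delta$. So it suffices to prove the purely combinatorial statement that, as $\beta$ runs over the index set of the sum (the weak compositions $\beta$ with $\strong(\beta)=\strong(\alpha)$ and $\beta\succcurlyeq\alpha$, i.e.\ those obtainable from $\alpha$ by repeatedly moving a zero past a positive entry on its right), the dominance intervals $[\beta,\qshift(\beta)]$ partition $D_\alpha:=\{\delta:\delta\domr\alpha,\ \strong(\delta)\succcurlyeq\strong(\alpha)\}$.

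First I would show each interval lies in $D_\alpha$. If $\strong(\beta)=\strong(\alpha)$ and $\beta\succcurlyeq\alpha$ then $\beta\domr\alpha$, so for $\delta\in[\beta,\qshift(\beta)]$ transitivity of dominance gives $\delta\domr\alpha$; and $\strong(\delta)\succcurlyeq\strong(\beta)=\strong(\alpha)$ because every weak composition lying, in dominance order, between $\beta$ and $\qshift(\beta)$ has flat refining $\strong(\beta)$ --- this is immediate from Definition~\ref{def:qshift}, since $\qshift$ only splits the maximal positive runs of $\beta$ and pushes mass leftward within bounded windows, and the cumulative-sum inequalities involved are of the type already handled at the start of the proof of Lemma~\ref{lem:siqshift1}.

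The heart of the matter is the reverse inclusion together with disjointness: each $\delta\in D_\alpha$ lies in exactly one such interval. Because $\strong(\delta)\succcurlyeq\strong(\alpha)$, there is a unique way to group consecutive parts of $\strong(\delta)$ so that the group-sums are the parts of $\strong(\alpha)$; using $\delta\domr\alpha$ one checks that each group occupies a contiguous block of positive entries of $\delta$ (the dominance hypothesis is exactly what prevents a group from straddling a zero that separates two parts of $\strong(\alpha)$). Let $\beta$ be obtained from $\delta$ by collapsing each group onto its rightmost position. Then $\strong(\beta)=\strong(\alpha)$; collapsing mass rightward only lowers cumulative sums, so $\beta\doml\delta$; and it remains to verify $\beta\domr\alpha$ (so that $\beta$ is in the index set) and $\delta\doml\qshift(\beta)$ --- these two facts are precisely the iterated content of Lemmas~\ref{lem:siqshift1} and~\ref{lem:siqshift2}, which were designed to propagate exactly this kind of interval compatibility one transposition at a time. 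Uniqueness of the grouping, hence of $\beta$, yields disjointness, and the two inclusions together give the partition.

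I expect this last step to be the main obstacle: producing $\beta$ from $\delta$ and certifying simultaneously that $\beta\domr\alpha$, that $\delta\in[\beta,\qshift(\beta)]$, and that no other $\beta$ works amounts to inverting the $\qshift$-interval structure, and both hypotheses on $\delta$ must be used together to pin down where its zeros sit. An alternative that organizes the same bookkeeping recursively is to induct on $\ell(w_\alpha)$ in parallel with the proof of Theorem~\ref{thm:qatom}: the base case $w_\alpha=1$ has $\alpha$ a strong composition with both sides equal to $x^\alpha$, and for the inductive step one chooses $i$ with $\alpha_i=0$, $\alpha_{i+1}\neq0$ and $\ell(w_{\qs_i\alpha})<\ell(w_\alpha)$, uses $\qslide_\alpha=\qpi_i\qslide_{\qs_i\alpha}$ (justified from Definition~\ref{def:qkey} by the same well-definedness argument used for the expansion $F_\alpha=\sum_\sigma\qbarpi_\sigma x^\alpha$), writes $\qpi_i=\qbarpi_i+1$, and applies the inductive hypothesis together with the recursion $\qatom_\beta=\qbarpi_i\qatom_{\qs_i\beta}$ of Lemma~\ref{lem:qatomrec} and the explicit action in Lemma~\ref{lem:pibar}; the crux is then to check that $\sum_{\beta}(\qbarpi_i\qatom_\beta+\qatom_\beta)$, summed over the index set attached to $\qs_i\alpha$, reassembles exactly into $\sum_{\beta'}\qatom_{\beta'}$ over the index set attached to $\alpha$.
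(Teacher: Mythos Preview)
The paper does not give its own proof of this statement: it is cited as a result of Searles, with no argument supplied. So there is no ``paper's proof'' to compare against; I can only evaluate your sketch on its own terms and against the tools the paper makes available.

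Your monomial-partition strategy is the right shape, and the target statement---that the intervals $[\beta,\qshift(\beta)]$ for $\beta$ in the index set partition $D_\alpha$---is correct. However, one of your intermediate claims is false. You assert that for $\delta\in D_\alpha$ each group of consecutive parts of $\strong(\delta)$ (in the refinement witnessing $\strong(\delta)\succcurlyeq\strong(\alpha)$) occupies a contiguous block of positive entries of $\delta$, and that dominance ``prevents a group from straddling a zero that separates two parts of $\strong(\alpha)$.'' Take $\alpha=(0,3,0,2)$ and $\delta=(3,1,0,1)$: then $\delta\domr\alpha$, $\strong(\delta)=(3,1,1)\succcurlyeq(3,2)=\strong(\alpha)$, and the second group $\{1,1\}$ sits at positions $2$ and $4$ of $\delta$, straddling the zero at position $3$, which in $\alpha$ is exactly the zero separating the two nonzero parts. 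Your construction ``collapse each group to its rightmost position'' still produces the correct $\beta=(3,0,0,2)$ here (and indeed $\delta=\qshift(\beta)$), so the map can be salvaged; but the contiguity claim is not the mechanism, and the verification that $\delta\doml\qshift(\beta)$ and that $\beta$ is unique needs a different argument. The right replacement is to define $p_j=\min\{p:\csum(\delta)_p\ge b_1+\cdots+b_j\}$ for $b=\strong(\alpha)$, observe that refinement forces $\csum(\delta)_{p_j}=b_1+\cdots+b_j$ exactly, and use $\delta\domr\alpha$ to get $p_j\le i_j$; the interval and uniqueness checks then follow from the description of $\csum(\qshift(\beta))$ in terms of the $p_j$.

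Your second route, induction on $\ell(w_\alpha)$ via $\qpi_i=\qbarpi_i+1$, is the one most native to this paper's framework and is probably the cleanest to write out. Be aware that Lemma~\ref{lem:qatomrec} only tells you $\qbarpi_i\qatom_\beta$ when $\beta_i\neq0$ and $\beta_{i+1}=0$; in the sum over the index set for $\qs_i\alpha$ you will also meet $\beta$ with $\beta_i=\beta_{i+1}=0$, and for those $\qbarpi_i\qatom_\beta$ is not zero in general (the monomials in $\qatom_\beta$ need not have $\gamma_i=\gamma_{i+1}=0$). So the reassembly step you flag as ``the crux'' genuinely requires computing $\qbarpi_i\qatom_\beta$ in that case as well, or else a bijective regrouping of the two index sets that bypasses it.
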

The same paper gives the expansion of the classical Demazure atoms as a sum of the fundamental atoms:

\begin{theorem}[Searles]
 Let $\alpha$ be a weak composition.  Then 
$$\atom_\alpha=\sum_{T\in \HSSF(\alpha)}\qatom_{\wt(T)}$$
\end{theorem}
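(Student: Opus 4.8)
\emph{Plan.} The strategy is to pass to monomial expansions on both sides and reduce the identity to a single weight-preserving bijection. By Theorem~\ref{thm:qatom}, $\qatom_\beta = \sum_{\beta \doml \gamma \doml \qshift(\beta)} x^\gamma$, so the $\qatom$'s are unitriangular with respect to the monomial basis under the dominance order: the unique $\doml$-minimal monomial of $\qatom_\beta$ is $x^\beta$. On the classical side, Mason's combinatorial model \cite{mason2009explicit} expresses $\atom_\alpha = \sum_{U \in \mathrm{SSF}(\alpha)} x^{\wt(U)}$, where $\mathrm{SSF}(\alpha)$ is the set of semistandard skyline fillings of shape $\alpha$ and $\HSSF(\alpha) \subseteq \mathrm{SSF}(\alpha)$ is the distinguished (``canonical'') subfamily indexing the theorem. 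It therefore suffices to produce, for every weak composition $\alpha$, a weight-preserving bijection
\[
\Psi\colon \mathrm{SSF}(\alpha) \;\longrightarrow\; \bigl\{\, (T,\gamma) \;:\; T \in \HSSF(\alpha),\ \wt(T) \doml \gamma \doml \qshift(\wt(T)) \,\bigr\},
\]
in the sense that $\Psi(U) = (T,\gamma)$ forces $\wt(U) = \gamma$. Granting $\Psi$, summing $x^{\wt(U)}$ over $U \in \mathrm{SSF}(\alpha)$ and regrouping by the first coordinate of $\Psi(U)$ gives $\atom_\alpha = \sum_{T \in \HSSF(\alpha)} \sum_{\wt(T)\doml\gamma\doml\qshift(\wt(T))} x^\gamma = \sum_{T\in\HSSF(\alpha)} \qatom_{\wt(T)}$ by Theorem~\ref{thm:qatom}.

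\emph{Construction of $\Psi$.} The bijection should be built from a ``rightward slide'' canonicalization $\Phi$ on fillings: working through the columns of $U$ from right to left, push each entry as far right as the column-strictness and row conditions of $\mathrm{SSF}(\alpha)$ permit, obtaining a filling $\Phi(U)$ with $\doml$-minimal weight in its slide-class; one verifies $\Phi(U) \in \HSSF(\alpha)$ and that the induced operation on weights is precisely the composition-level map underlying Definition~\ref{def:qshift}. Set $\Psi(U) = (\Phi(U), \wt(U))$. The two dominance bounds are then exactly the weight bookkeeping carried out in Lemmas~\ref{lem:siqshift1} and~\ref{lem:siqshift2}: because each elementary slide only moves boxes to weakly earlier positions one obtains $\wt(\Phi(U)) \doml \wt(U)$, and because each slide moves a box by an amount bounded by the relevant zero-gap one obtains $\wt(U) \doml \qshift(\wt(\Phi(U)))$. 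Conversely, given $T \in \HSSF(\alpha)$ and a target $\gamma$ in the interval, one reconstructs $U$ with $\Phi(U)=T$ and $\wt(U)=\gamma$ by undoing the slides one column at a time, reading off from $\gamma$ and the gap structure of $T$ exactly how far each box must be pulled back; this is the inverse of $\Psi$.

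\emph{Main obstacle.} The delicate point is the reconstruction step --- showing $\Phi^{-1}(T)$ realizes \emph{every} weight in $[\wt(T),\qshift(\wt(T))]$ and does so bijectively, i.e.\ that the slide is locally reversible and that the semistandardity constraints are met at every intermediate stage if and only if $\gamma$ lies in the stated dominance interval. This is precisely where the interaction between the skyline-filling conditions and the fundamental shift must be controlled, and it is why Lemmas~\ref{lem:siqshift1}--\ref{lem:siqshift2} together with Theorem~\ref{thm:qatom} are the load-bearing inputs. As a fallback that sidesteps an explicit bijection, one can instead argue by the unitriangularity noted above: expand $\atom_\alpha$ by the greedy peeling algorithm (repeatedly subtract $\qatom_\beta$ for $\beta$ a $\doml$-minimal monomial of the current remainder, using Lemma~\ref{lem:qatombasis} to know this terminates), and show that the remainder stays a nonnegative sum of monomials throughout and that the multiset of indices $\beta$ peeled off is exactly $\{\wt(T) : T \in \HSSF(\alpha)\}$; the combinatorial content of that last claim is again the same dominance-interval analysis.
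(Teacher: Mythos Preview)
The paper does not prove this theorem; it is quoted as a result of Searles~\cite{Sea2020}, and the reader is explicitly referred there both for the proof and for the definition of $\HSSF(\alpha)$. So there is no in-paper argument to compare against.

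Assessed on its own, your proposal is a plan with the crucial steps asserted rather than carried out. Two concrete gaps: First, you never engage with the definition of $\HSSF(\alpha)$, so the claim ``one verifies $\Phi(U)\in\HSSF(\alpha)$'' is unsupported---that claim is essentially the whole theorem. Moreover, your description of $\Phi$ as ``push each entry as far right as the column-strictness and row conditions permit, working through the columns'' is confused for the skyline model: the shape $\alpha$ fixes the column structure, so cells cannot move between columns, and $\wt(U)_i$ counts cells with \emph{entry} $i$. To change weights one must change entry \emph{values}; the relevant canonicalization (Searles' destandardization) raises each entry to the largest value allowed by the filling rules, and $\HSSF(\alpha)$ is by definition the set of fixed points of that map. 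Your $\Phi$, as written, would not alter the weight at all. Second, Lemmas~\ref{lem:siqshift1} and~\ref{lem:siqshift2} are purely composition-level facts about how $\qs_i$ interacts with $\doml$ and $\qshift$; they contain no information about skyline fillings and cannot by themselves show that the fibers $\Phi^{-1}(T)$ realize every weight in $[\wt(T),\qshift(\wt(T))]$ exactly once. That requires verifying the skyline triple conditions at each stage of the (correctly formulated) destandardization, which you have not done. The fallback greedy-peeling argument has the same missing content: identifying the peeled indices with $\{\wt(T):T\in\HSSF(\alpha)\}$ \emph{is} the theorem.

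The high-level shape---canonicalize each semistandard skyline filling to a ``highest'' one and show the fibers are exactly the dominance intervals appearing in Theorem~\ref{thm:qatom}---is the right idea and matches what Searles does. What is missing is the correct formulation of the canonicalization as a value-raising map and the filling-level combinatorics showing its fibers have the claimed structure.
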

Here, $\HSSF(\alpha)$ is a subset of the semi-skyline augmented fillings of shape $\alpha$ first defined by Mason in \cite{Mason2008}.  We direct the reader to Searles \cite{Sea2020} for the details.

\section{Products}

\begin{definition}\label{def:shuffleset}
Let $\alpha$ and $\beta$ be weak compositions of length $n$. Let $A$ and $B$ be the words $A=(2n-1)^{\alpha_1}\cdots (3)^{\alpha_{n-1}}(1)^{\alpha_n}$ and $B=(2n)^{\beta_1}\cdots (4)^{\beta_{n-1}}(2)^{\beta_n}$.  The {\em shuffle set} of $\alpha$ and $\beta$ is 
\[\SSet(\alpha,\beta)=\{C\in A \shuffle B: \Des_A(C)\domr \alpha \text{ and }\Des_B(C)\domr\beta\}\] where $\Des_A(C)_i$ (resp. $\Des_B(C)_i$) is the number of letters from $A$ in the $i$th increasing run of $C$.
\end{definition}

In \cite{AS2017}, Assaf and Searles prove the following:
\begin{theorem}\label{thm:qktimesqk}
Let $\alpha$ and $\beta$ be weak compositions of length $n$.  Then 
\[\qslide_\alpha \cdot \qslide_\beta = \sum_{D\in \SSet(\alpha,\beta)} \qslide_{\Des(D)}.\]
\end{theorem}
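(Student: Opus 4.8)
The plan is to establish the identity by expanding both sides into monomials with Theorem~\ref{thm:slide} and then matching them through an explicit weight-preserving bijection. First I would apply Theorem~\ref{thm:slide} to each factor on the left, writing
\[
\qslide_\alpha \cdot \qslide_\beta = \sum_{(\gamma,\delta)} x^{\gamma+\delta},
\]
where the sum runs over all pairs of length-$n$ weak compositions with $\gamma\domr\alpha$, $\strong(\gamma)\succcurlyeq\strong(\alpha)$, $\delta\domr\beta$, and $\strong(\delta)\succcurlyeq\strong(\beta)$; I will call such a pair \emph{admissible}. Applying Theorem~\ref{thm:slide} again to each summand on the right gives
\[
\sum_{D\in\SSet(\alpha,\beta)}\qslide_{\Des(D)} = \sum_{D\in\SSet(\alpha,\beta)}\ \sum_{\substack{\mu\domr\Des(D)\\ \strong(\mu)\succcurlyeq\strong(\Des(D))}} x^{\mu},
\]
so the theorem reduces to a bijection between admissible pairs $(\gamma,\delta)$ and pairs $(D,\mu)$ with $D\in\SSet(\alpha,\beta)$, $x^\mu$ a monomial of $\qslide_{\Des(D)}$, and $\mu=\gamma+\delta$.

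To construct the bijection I would exploit the fact that an admissible $\gamma$ comes with a canonical witness to its membership conditions: writing $\strong(\alpha)=(a_1,\dots,a_\ell)$, the refinement $\strong(\gamma)\succcurlyeq\strong(\alpha)$ partitions the nonzero parts of $\gamma$ into $\ell$ consecutive blocks with block $j$ summing to $a_j$, and the dominance $\gamma\domr\alpha$ controls how far left block $j$ may sit. Attach to each box of $\gamma$ the odd letter that $A$ assigns to the part $a_j$ of its block, and symmetrically attach even letters of $B$ to the boxes of $\delta$. Reading the combined monomial $x^{\gamma+\delta}$ left to right and, within each occupied position, listing the attached letters in increasing order (well defined since the $2n$ letters are distinct) produces a word $D$; one checks that its odd subword is $A$ and its even subword is $B$, so $D\in A\shuffle B$, that every descent of $D$ occurs at a boundary between occupied positions, and---by counting how many $A$-letters and $B$-letters fall in each increasing run---that $\Des_A(D)\domr\alpha$ and $\Des_B(D)\domr\beta$, whence $D\in\SSet(\alpha,\beta)$, while the same accounting exhibits $\mu=\gamma+\delta$ in the support of $\qslide_{\Des(D)}$. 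The inverse splits the letters of $D$ across occupied positions in the proportions dictated by $\mu$ and reads off $(\gamma,\delta)$ from the $A$/$B$ and run data, with Theorem~\ref{thm:slide} again certifying admissibility.

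The hard part will be the bookkeeping that makes this bijection well defined in both directions: one must translate the dominance inequalities on the left ($\gamma\domr\alpha$, $\delta\domr\beta$) into, on the right, the pair $\Des_A(D)\domr\alpha$, $\Des_B(D)\domr\beta$ together with $\mu\domr\Des(D)$, which requires tracking how partial sums behave under merging occupied positions into increasing runs and then splitting runs back into positions, and must check that the block structure of $(\gamma,\delta)$ is compatible with the run structure of $D$. I expect to isolate this as a lemma asserting that for each fixed $D\in\SSet(\alpha,\beta)$ the admissible pairs sent to $D$ are precisely the preimages under $(\gamma,\delta)\mapsto\gamma+\delta$ of the monomials of $\qslide_{\Des(D)}$; the refinement half of such a lemma is a routine identification of which block or run a given box belongs to, whereas the dominance half carries the real content. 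A useful preliminary observation is that $\Des_A(D)\domr\alpha$ and $\Des_B(D)\domr\beta$ force every letter of $D$ into its first $n$ increasing runs, so $\Des(D)$ has length at most $n$ and the right-hand side genuinely lies in $\mathbb{C}[x_1,\dots,x_n]$, consistent with the degree-$(|\alpha|+|\beta|)$ homogeneity of the left-hand side.
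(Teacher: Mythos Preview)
The paper does not prove this theorem; it is quoted from Assaf and Searles~\cite{AS2017}. Your monomial-expansion-plus-bijection strategy is the natural one and in fact closely parallels the argument the paper \emph{does} give for the neighbouring Theorem~\ref{thm:qktimesqda} (the product $\qslide_\alpha\cdot\qatom_\beta$), where the same ``attach letters to positions, concatenate, then read off runs'' idea underlies the bijection $\psi$.

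One point in your sketch needs attention. As written, the word $D$ you build from $(\gamma,\delta)$ forgets which positions of $\gamma+\delta$ were empty, so $\Des(D)$ comes out as a strong composition whose length is the number of runs. If you then view it as a length-$n$ weak composition by padding with \emph{trailing} zeros, your assertion that $x^{\gamma+\delta}$ lies in $\qslide_{\Des(D)}$ can fail whenever $\gamma+\delta$ has a leading zero: take $\gamma=\alpha=(0,2,1)$, $\delta=\beta=(0,1,2)$, so $D=334122$ with $\Des(D)=(3,3)$, yet $(0,3,3)\not\domr(3,3,0)$. In the paper's proof of Theorem~\ref{thm:qktimesqda} the analogous issue is handled by the $*$ symbols of $\qSSet$, which serve as explicit placeholders for runs of length zero and thereby fix where the zeros of $\Des(D)$ sit. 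For Theorem~\ref{thm:qktimesqk} the resolution is a convention---implicit in the paper's statement, explicit in~\cite{AS2017}---on how the run-length data is positioned as a weak composition of length $n$. Once you adopt the correct placement convention (or, equivalently, carry the positional information through as the paper does with $*$'s), the bijection you outline goes through essentially as sketched.
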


We can build on this language to define the product of a fundamental slide polynomial with a fundamental atom.  As above, we'll shuffle $A$ and $B$, this time adding additional characters $*$ to create a word $D$. When computing $\Des_A(D), \Des_B(D),$ or $\Des(D)$ consider each $*$ to represent a run of size 0. 
\begin{definition}\label{def:qshuffleset}

As above, let $\alpha$ and $\beta$ be weak compositions of length $n$ and $A=(2n-1)^{\alpha_1}\cdots (3)^{\alpha_{n-1}}(1)^{\alpha_n}$ and $B=(2n)^{\beta_1}\cdots (4)^{\beta_{n-1}}(2)^{\beta_n}$.  A word $D$ is in the {\em fundamental shuffle set} of $\alpha$ and $\beta$ ($D\in \qSSet(\alpha,\beta)$) if:
\begin{itemize}
\item $D\in A\shuffle B\shuffle \underbrace{***\dots*}_k$ with $k$ chosen so that there are a total of $n$ runs in $D$.
\item If $D$ contains the subsequence $c**\cdots** d$ where $c,d\neq *$, then $c>d$.  That is, descents surround any subsequence of $*$'s (except possibly at the beginning or the end of $D$).
\item $\strong(\Des_A(D))\succcurlyeq\strong(\alpha)$ and $\Des_A(D)\domr \alpha$.
\item $\qshift(\beta)\domr \Des_B(D)\domr \beta$
\end{itemize}
\end{definition} 
\begin{example}\label{ex:FtimesA}  Adding dividers to mark the descents, we have
\begin{align*}&\qSSet((0,2,1),(0,3,1))\\&\hspace{.3cm}=\left\{ *|33444|12,4|3344|12,33|1444|2,34|344|12,44|334|12,344|34|12,334|144|2,3344|14|2\right\}\end{align*}
\end{example}

\begin{theorem}\label{thm:qktimesqda}
Let $\alpha$ and $\beta$ be weak compositions of length $n$.  Then 
\[\qslide_\alpha \cdot \qatom_\beta = \sum_{D\in \qSSet(\alpha,\beta)} \qatom_{\Des(D)}.\]
\end{theorem}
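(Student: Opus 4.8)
The plan is to combine the monomial expansion for $\qslide_\alpha$ from Theorem~\ref{thm:slide} with the monomial expansion for $\qatom_\beta$ from Theorem~\ref{thm:qatom}, multiply out, and match the resulting monomials against the right-hand side by exhibiting a bijection between shuffle words and pairs of compositions. Concretely, write
\[
\qslide_\alpha\cdot\qatom_\beta
 = \left(\sum_{\substack{\gamma\domr\alpha\\ \strong(\gamma)\succcurlyeq\strong(\alpha)}} x^\gamma\right)
   \left(\sum_{\delta\doml\epsilon\doml\qshift(\delta)} x^\delta\right)
 = \sum_{\gamma,\delta} x^{\gamma+\delta},
\]
where $\gamma$ ranges over the index set of Theorem~\ref{thm:slide} and $\delta$ over that of Theorem~\ref{thm:qatom} with $\delta \doml \beta$ replaced appropriately; so the left side is a sum of monomials $x^{\gamma+\delta}$ with (a priori) multiplicity. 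The goal is to show this equals $\sum_{D\in\qSSet(\alpha,\beta)}\qatom_{\Des(D)}$, which (again by Theorem~\ref{thm:qatom}) expands as $\sum_D \sum_{\Des(D)\doml\mu\doml\qshift(\Des(D))} x^\mu$.

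The key step is to set up a weight-preserving bijection. Given a word $D\in\qSSet(\alpha,\beta)$ together with a choice of $\mu$ with $\Des(D)\doml\mu\doml\qshift(\Des(D))$, I would read off the contribution of the $A$-letters to each run to recover a composition $\gamma := \Des_A(D)$ (which by the third bullet of Definition~\ref{def:qshuffleset} satisfies $\gamma\domr\alpha$ and $\strong(\gamma)\succcurlyeq\strong(\alpha)$, hence indexes a monomial of $\qslide_\alpha$), and the $B$-letters to recover $\Des_B(D)$, which by the fourth bullet lies between $\beta$ and $\qshift(\beta)$ in dominance — so $x^{\Des_B(D)}$ is a monomial of $\qatom_\beta$ by Theorem~\ref{thm:qatom}. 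The two pieces satisfy $\gamma + \Des_B(D) = \Des(D)$ run-by-run, and then the refinement-by-$\mu$ step on the $\qatom_{\Des(D)}$ side should correspond to a further splitting internal to the atom factor. The inverse map: given $\gamma$ and $\delta$ as in the product expansion, the runs of the desired word $D$ are dictated by $\gamma+\delta$, and within run $i$ one places $\gamma_i$ $A$-letters (all equal, of the appropriate value $2(n-i)+1$) followed by $\delta_i$ $B$-letters; the $*$'s fill empty runs; the "descents surround the $*$'s" condition and the weak-increase-within-runs condition then need to be checked to be automatic, and one must verify that the resulting $D$ satisfies all four bullets of Definition~\ref{def:qshuffleset} exactly when $\gamma,\delta$ came from the product.

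The main obstacle I expect is bookkeeping the asymmetry between the two factors: $\qslide_\alpha$ carries the extra refinement condition $\strong(\gamma)\succcurlyeq\strong(\alpha)$ (from Theorem~\ref{thm:slide}) while $\qatom_\beta$ carries the two-sided dominance squeeze $\beta\doml\delta\doml\qshift(\beta)$ (from Theorem~\ref{thm:qatom}), and these translate into genuinely different constraints on how $A$-letters versus $B$-letters may be distributed among runs and, crucially, on where $*$-runs may appear — the $\qshift$ on the $\beta$-side is what forces the "$c > d$ across a block of $*$'s" condition, and reconciling that with the fact that $x^{\gamma+\delta}$ itself need not be squeezed by $\qshift(\Des(D))$ until one re-expands via Theorem~\ref{thm:qatom} is the delicate point. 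I would prove the forward containment (every monomial on the left appears on the right with coefficient at least its left multiplicity) and the reverse containment separately, using Lemmas~\ref{lem:siqshift1} and~\ref{lem:siqshift2} to handle the $\qshift$ interactions, and finally argue the multiplicities agree — most cleanly by showing the bijection above is well-defined in both directions, so that no cancellation or overcounting occurs.
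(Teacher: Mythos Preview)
Your overall strategy---expand both sides into monomials via Theorems~\ref{thm:slide} and~\ref{thm:qatom} and match by bijection---is the same as the paper's. But the bijection you propose has a genuine gap. Your forward map sends $(D,\mu)$ to $(\Des_A(D),\Des_B(D))$; this ignores $\mu$ entirely, so it is many-to-one, and it is not weight-preserving either, since $\Des_A(D)+\Des_B(D)=\Des(D)$, which is generally not equal to $\mu$. The phrase ``a further splitting internal to the atom factor'' gestures at the problem but does not resolve it. Your inverse map has the same defect in reverse: from $(\gamma,\delta)$ you produce a word $D$ but never say what $\mu$ should be.

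The missing idea is a merge-and-remember mechanism. The paper's map $\psi$ takes $(\nu,\delta)$ and first builds an intermediate word $C$ by placing, in part $i$, the next $\nu_i$ letters of $A$ and the next $\delta_i$ letters of $B$ (sorted increasingly). This $C$ need not have a descent at every part boundary. One then \emph{merges}: whenever two consecutive nonempty parts of $C$ lack a descent between them, push the left part's contents into the right part and mark the emptied slot with $*$. The result is $D\in\qSSet(\alpha,\beta)$, and the image is the pair $(D,\gamma)$ with $\gamma=\nu+\delta$. The point is that $\Des(D)\neq\gamma$ in general; $\gamma$ is genuine extra data recording where the part boundaries sat before merging, and this is exactly what forces $\Des(D)\doml\gamma\doml\qshift(\Des(D))$. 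The inverse $\phi$ uses $\gamma$ (not $\Des(D)$) to re-divide $D$ into parts and then counts odd and even letters in each part to recover $\nu$ and $\delta$.

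A smaller error: in your inverse you say the $A$-letters in run $i$ are ``all equal, of the appropriate value $2(n-i)+1$''. They are indeed all equal within a run (since runs increase and the letters of $A$ weakly decrease), but their common value is determined by which consecutive block of $A$ you have reached, not by the run index $i$.
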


\begin{proof}
Let $\alpha$ and $\beta$ have length $n$.  Then 
\begin{equation}\label{eq:RHS}
\sum_{D\in \qSSet(\alpha,\beta)} \qatom_{\Des(D)} = \sum_{D \in \qSSet(\alpha,\beta)}\sum_{\Des(D)\doml \gamma \doml\qshift(\Des(D))} x^\gamma.
\end{equation}
On the other hand, 
\begin{equation}\label{eq:LHS}
\qslide_\alpha\cdot \qatom_\beta = \left(\sum_{\substack{\nu\domr\alpha\\\strong(\nu)\succcurlyeq\strong(\alpha)}}x^\nu\right)\left(\sum_{\beta\doml \delta \doml \qshift(\beta)}x^\delta\right).
\end{equation}
 Let $X=\{(\nu,\delta): \nu \domr\alpha, \strong(\nu)\succcurlyeq \strong(\alpha), \beta \doml \delta \doml\qshift(\beta)\} $ and $Y=\{(D,\gamma): D\in \qSSet(\alpha,\beta), \Des(D)\doml \gamma \doml \qshift(\Des(D))\}$.

Define $\psi:X\rightarrow Y$.  Let $(\nu,\delta) \in X$ with $\ell(\nu)=\ell(\delta)=k$.  Let $\gamma=\nu+\delta$.  Let $A=(2n-1)^{\alpha_1}\cdots (3)^{\alpha_{n-1}}(1)^{\alpha_n}$ and $B=(2n)^{\beta_1}\cdots (4)^{\beta_{n-1}}(2)^{\beta_n}$.  Create $A'$ from $A$ by placing $k-1$ vertical dividers in each word such that in $A'$ there are $\nu_1$ increasing entries before the first divider, $\nu_k$ increasing entries after the last divider, and $\nu_i$ increasing entries between the $i-1$st and $i$th dividers for $1<i<k$.  Create $B'$ from $B$ similarly, using $\delta$.  Create a new word $C$ from $A'$ and $B'$ by placing the elements in the $i$th part of $A'$ and the $i$th part of $B'$ together in the $i$th part of $C$ and rearranging as needed so the elements are in increasing order.  In particular, the restrictions on $\nu$ and $\delta$ force that there can only be one distinct integer of each parity in each part.  Leaving the dividers in $C$, note that the vector of sizes of the parts of $C$ is $\gamma$ by construction. 

Next, our goal is to adjust $C$ so that the resulting word $D$ has descents dividing the nonzero parts.  Consider any two consecutive nonempty parts $i$ and $j$ in $C$, (i.e. jumping over any parts that are empty).  If the last entry in part $i$ is not larger than the first entry in $j$, move the entries in $i$ to part $j$ to begin forming a new word $D$, in which the $i$th part empty and the $j$th part strictly increasing.  Thus all parts will be divided by a descent, after deleting the empty parts.  Replace any empty parts with a $*$.  Remove all remaining dividers to get $D$.  Note that since we move integers only into the nearest parts that already contain integers, $\Des(D)\doml \gamma \doml\qshift(\Des(D))$.

Then $\psi((\nu,\delta)) = (D,\gamma)$. 

We must show show that the image of $\psi$ is contained in $Y$.
 Since $\nu \domr\alpha, \strong(\nu)\succcurlyeq \strong(\alpha), \beta \doml \delta \doml\qshift(\beta)$, if we split $C$ according to $\gamma$, the resulting composition of odd parts is $\nu$, while the composition of odd parts is $\delta$.  When we move integers in $C$ to their position in $D$, if $i$ and $j$ contain parts of the same parity, it must be that they are exactly the same integer (or else we would have a descent already between the $i$th and $j$th part).  Thus $\strong(\Des_A(D))\succcurlyeq\strong(\alpha)$ and $\Des_A(D)\domr \alpha$ and
 $\qshift(\beta)\domr \Des_B(D)\domr \beta$ and since $C$ was a shuffle of $A$ and $B$, the elements of $A$ and $B$ remain in order in $D$.  

Define the inverse $\phi: Y\rightarrow X$ by the following process:  Let $(D,\gamma) \in Y$ with $\ell(\gamma)=k$.  Place $k-1$ vertical dividers in $D$ between entries such that the number of entries in $D$ between the $i-1$st and $i$th dividers is $\gamma_i$ for $1<i<k$, the number of entries prior to the first divider is $\gamma_1$, and the number after the $k-1$st divider is $\gamma_k$.  The number of even values before the first divider is $\delta_1$ and the number of odd values before the first divider is $\nu_1$.  Similarly, the number of even values after the $k-1$st divider is $\delta_k$ and the number of odd values is $\nu_k$. Then $\phi((D,\gamma)) = (\nu, \delta)$. 
\end{proof}

\begin{example}
Let $\alpha=(2,0,3,0,1)$ and $\beta=(0,0,2,0,2)$.  We show an example of $\psi$.  For $(\nu,\delta) \in Y$ where $\nu=(2,2,1,0,1)$ and $\delta=(0,0,2,1,1)$.  Then $A=995551$ and $B=6622$ so $A'=99|55|5||1$ and $B'=||66|2|2$.  Thus $C=99|55|566|2|12$ and $\gamma=(2,2,3,1,2)$.  Since the second $|$ in $C$ does not occur at a descent, $D=99||55566|2|12 = 99*55566212$ and $\Des(D) = (2,0,5,1,2)$.      
\end{example}
\begin{example} 
 Continuing Example \ref{ex:FtimesA}
 \begin{align*}\qslide_{(0,2,1)}\cdot \qatom_{(0,3,1)}&= \qatom_{(0,5,2)}+\qatom_{(1,4,2)}+\qatom_{(2,4,1)}+2\qatom_{((2,3,2)}+\qatom_{(3,2,2)}+\qatom_{(3,3,1)}+\qatom_{(4,2,1)}
\end{align*}
\end{example}
\begin{remark} Searles in \cite{Sea2020} gives the product of $s_{\lambda}(x_1,x_2,\cdots,x_n)\qatom_{\beta}$ as a sum of fundamental atoms and refers to it as a Littlewood-Richardson formula.  Since $\qslide_\alpha$ can be specialized to give Gessel's fundamental basis, this is a more refined quasisymmetric analogue of the same when $\alpha$ is a strong composition.
    
\end{remark}

\subsection{Structure constants for the Fundamental Atoms}
Assaf and Searles in \cite{AS2017} give the structure constants for the fundamental slide polynomials, which are positive and given above.  Here we give a formula for the product of two fundamental atoms.  

Like in the classical case, the product of two fundamental atoms is not  fundamental atom positive, although it's a well studied problem to look at particular cases when the product is positive. (See \cite{pun2016decomposition} and \cite{MR2737282}, discussed further below.)

Let $\alpha\models_w n$ and $\beta\models_w m$ be weak compositions of $n$ of length $k$.  The product of two fundamental atoms is not fundamental atom positive but each coefficient is always integral.   In this section we define a set $\qamp{\alpha}{\beta}$ of ordered multiset partitions such that $$\qatom_\alpha\cdot \qatom_\beta=\sum_{S\in\qamp{\alpha}{\beta}}(-1)^{\sgn{S}}\qatom_{\wt(S)},$$ where this sum is cancellation free.

The relevant ordered multisets partition a multiset of positive integer elements, some of which are barred or circled.  We refer to the underlying integer as the  ``value'' of the element.  To define the set, we need the following conventions:
$$\circled{1}<\overline{\circled{1}}<1<\overline{1}<\circled{2}<\overline{\circled{2}}<2<\overline{2}\cdots$$

Let $\gamma\in \{\alpha,\beta\}$. If $\gamma_i\neq 0$ and there exists a nonzero to the left of $\gamma_i$ in $\gamma$, let $j(\gamma,i)-1$ be the first nonzero entry found when starting from the immediate left of $\gamma_i$ and scanning further left. If no such nonzero entry exists, $j(\gamma,i)=1$.  (That is, $j(\gamma,i)$ is the position where $\gamma_i-1$ is placed in the fundamental shift of $\gamma$.)

\begin{definition}[fundamental atom multiset partition]  $S=(S_1,S_2,\cdots,S_k)$ is a \textit{fundamental atom multiset partition of type $\alpha$, $\beta$} ($S\in \qamp{\alpha}{\beta}$) if:

\begin{itemize}
\item $S$ partitions the multiset $$\bigcup_{i:\alpha_i\neq 0}\{{\overline{i}}^{(\alpha_i-1)},\overline{\circled{i}}\}\cup \bigcup_{i:\beta_i\neq 0}\{{i}^{(\beta_i-1)},\circled{i}\},$$
where $s^{(m)}$ indicates $s$ occurs with multiplicity $m$ in the multiset.  That is, there are $\alpha_i$ $\overline{i}$'s, one of which is circled, and $\beta_i$ $i$'s, one of which is circled.
\item  $\{\overline{i}, \overline{\circled{i}}\}$ can only occur with positive multiplicity in $S_{j(\alpha,i)},S_{j(\alpha,i)+1}, \cdots, S_i$ and  $\{{i}, {\circled{i}}\}$ can only occur with positive multiplicity in $S_{j(\beta,i)},S_{j(\beta,i)+1}, \cdots, S_i$.  That is, terms can only occur as far right in the multiset as their value, and can only move 
left in the multiset if they move left into positions corresponding to the  zeroes in $\alpha$ (or $\beta$) immediately to the left of $\alpha_i$ (or $\beta_i$ respectively).
\item If only one of $\overline{\circled{i}}$ or $\circled{i}$ exist, it must be in $S_i$. If both exist, $\overline{\circled{i}}$ must be in $S_i$ and if $\circled{i}$ is in $S_j$ for $j<i$, every other element with value $i$ must be in $S_k$ for some $k\leq j$.
\item If the elements of each set $S_i$ are listed in weakly decreasing order, then there must be an ascent between the end of one set and the start of the next nonempty set.
\end{itemize}
For such an $S$, let $\sgn{S}$ give the number of $i$ such that $\circled{i}$ is in a set left of $\overline{\circled{i}}$ in $S$ and let $\wt(S)=[|S_1|,|S_2|,\cdots, |S_k|]$
\end{definition}
\begin{example}  We omit the set notation for each $S_i$ and give each set in weakly decreasing order, so that the ascents between sets are obvious.
       \begin{align*}\qatom_{003}\cdot \qatom_{003}&=(\emptyset,\emptyset,\overline{3}\overline{3}33\overline{\circled{3}}\circled{3})+(\emptyset,\tikzmarknode{1a}{3},\tikzmarknode{1b}{\overline{3}}\overline{3}3\overline{\circled{3}}\circled{3})+(\emptyset,\tikzmarknode{2a}{3}\tikzmarknode{2c}{3},\tikzmarknode{2b}{\overline{3}}\tikzmarknode{2d}{\overline{3}}\overline{\circled{3}}\circled{3})+(\emptyset,\tikzmarknode{3c}{\overline{3}}\tikzmarknode{3a}{3},\tikzmarknode{3b}{\overline{3}}3\overline{\circled{3}}\circled{3})\\&+(\emptyset,\tikzmarknode{4c}{\overline{3}}\tikzmarknode{4a}{3}\tikzmarknode{4d}3,\tikzmarknode{4b}{\overline{3}}\overline{\circled{3}}\circled{3})
    -(\emptyset, {\overline{3}{\overline{3}}3\tikzmarknode{5a}{3}}\circled{\tikzmarknode{5b}{{3}}},\overline{\circled{\tikzmarknode{5c}{3}}})+(\tikzmarknode{6a}{3},\emptyset,\tikzmarknode{6b}{\overline{3}}\overline{3}3\overline{\circled{3}}\circled{3})
    +(\tikzmarknode{7d}{3},\tikzmarknode{7c}{\overline{3}}\tikzmarknode{7a}{3},\tikzmarknode{7b}{\overline{3}}\overline{\circled{3}}\circled{3})\\& -(\tikzmarknode{8d}{3}, {\tikzmarknode{8e}{\overline{3}}{\overline{3}}\tikzmarknode{8a}{3}}\circled{\tikzmarknode{8b}{{3}}},\overline{\circled{\tikzmarknode{8c}{3}}})+
    (\tikzmarknode{9c}{\overline{3}}\tikzmarknode{9a}{3},\emptyset,\tikzmarknode{9b}{\overline{3}}3\overline{\circled{3}}\circled{3})+ (\tikzmarknode{10a}{3}\tikzmarknode{10c}{3},\emptyset,\tikzmarknode{10b}{\overline{3}}\overline{3}\overline{\circled{3}}\circled{3})
-(\tikzmarknode{11d}{3}\tikzmarknode{11f}{3}, {\tikzmarknode{11e}{\overline{3}}{\overline{3}}}\circled{\tikzmarknode{11b}{{3}}},\overline{\circled{\tikzmarknode{11c}{3}}})
\\&-(\tikzmarknode{12f}{\overline{3}}\tikzmarknode{12d}{3}, {{\overline{\tikzmarknode{12g}{3}}}\tikzmarknode{12e}{3}}\circled{\tikzmarknode{12b}{{3}}},\overline{\circled{\tikzmarknode{12c}{3}}})
+(\tikzmarknode{13c}{\overline{3}}\tikzmarknode{13a}{3}\tikzmarknode{13d}3,\emptyset,\tikzmarknode{13b}{\overline{3}}\overline{\circled{3}}\circled{3})
-(\tikzmarknode{14c}{\overline{3}}\tikzmarknode{14a}{3}\tikzmarknode{14d}3,\tikzmarknode{14b}{\overline{3}}\circled{\tikzmarknode{14e}{3}},\overline{\circled{\tikzmarknode{14f}{3}}})
 -( {\overline{3}{\overline{3}}3\tikzmarknode{15a}{3}}\circled{\tikzmarknode{15b}{{3}}},\emptyset,\overline{\circled{\tikzmarknode{15c}{3}}})
   \\&=\qatom_{006} + \qatom_{015} + 2  \qatom_{024} + \qatom_{033} - \qatom_{051} + \qatom_{105} + \qatom_{123} - \qatom_{141} + 2  \qatom_{204} - 2  \qatom_{231} \\&\hspace{1in}+\qatom_{303} - \qatom_{321} - \qatom_{501}
   \end{align*}
  \begin{align*}\qatom_{102}\cdot \qatom_{002}&=(\overline{1},\emptyset,\overline{3}3 \overline{\circled{3}}\circled{3})+(\overline{1},\tikzmarknode{01a}{3}, \overline{\tikzmarknode{01b}{3}}\overline{\circled{3}}\circled{3})+(\tikzmarknode{02b}{3}\overline{\tikzmarknode{02a}{1}},\emptyset, \overline{{3}}\overline{\circled{3}}\circled{3})
     -(\overline{1}, {\overline{3}\tikzmarknode{03a}{3}}\circled{\tikzmarknode{03c}{3}},{\overline{\circled{\tikzmarknode{03d}{3}}}})
     \\
     &\hspace{1in}-(\tikzmarknode{04f}{3}\overline{\tikzmarknode{04e}{1}},\overline{\tikzmarknode{04a}{3}}\circled{\tikzmarknode{04c}{3}},{\overline{\circled{\tikzmarknode{04d}{3}}}})
     \\&=\qatom_{104} + \qatom_{113}+ \qatom_{203} - \qatom_{131}  - \qatom_{221}
     \end{align*}

\end{example}

\begin{theorem}Let $\alpha$ and $\beta$ be weak compositions of length $n$. Then  $$\qatom_\alpha\cdot \qatom_\beta=\sum_{S\in\qamp{\alpha}{\beta}}(-1)^{\sgn{S}}\qatom_{\wt(S)}.$$
\end{theorem}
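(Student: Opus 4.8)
The plan is to reduce the identity to a monomial count via Theorem~\ref{thm:qatom} and then prove that count by a sign-reversing involution built on the shuffle bijection of Theorem~\ref{thm:qktimesqda}. Expanding each factor on the left by Theorem~\ref{thm:qatom} writes $\qatom_\alpha\cdot\qatom_\beta$ as the sum of $x^{\mu+\nu}$ over all pairs of weak compositions with $\alpha\doml\mu\doml\qshift(\alpha)$ and $\beta\doml\nu\doml\qshift(\beta)$, while expanding each $\qatom_{\wt(S)}$ on the right writes the claimed expression as $\sum_{S\in\qamp{\alpha}{\beta}}(-1)^{\sgn{S}}\sum_{\wt(S)\doml\gamma\doml\qshift(\wt(S))}x^\gamma$. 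Both sides are homogeneous of the same degree, so it is enough to prove, for every weak composition $\gamma$,
\begin{equation*}
\#\bigl\{(\mu,\nu):\ \alpha\doml\mu\doml\qshift(\alpha),\ \beta\doml\nu\doml\qshift(\beta),\ \mu+\nu=\gamma\bigr\}\;=\;\sum_{\substack{S\in\qamp{\alpha}{\beta}\\\wt(S)\doml\gamma\doml\qshift(\wt(S))}}(-1)^{\sgn{S}}.\tag{$\star$}
\end{equation*}

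For the positive part of $(\star)$ I would attach to each pair $(\mu,\nu)$, with $\gamma=\mu+\nu$, a distinguished element of $\qamp{\alpha}{\beta}$: for every $i$, record how many of the $\alpha_i$ units of $\mu$ and of the $\beta_i$ units of $\nu$ lie in each position, put that many barred $i$'s (for $\mu$) and unbarred $i$'s (for $\nu$) in the corresponding block, and mark by $\overline{\circled{i}}$ and $\circled{i}$ the units that the fundamental shift leaves in place at position $i$; then, exactly as in the proof of Theorem~\ref{thm:qktimesqda}, merge any two consecutive nonempty blocks not already separated by a descent, turning emptied blocks into the $*$-gaps of a fundamental shuffle set. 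The hypotheses $\alpha\doml\mu\doml\qshift(\alpha)$ and $\beta\doml\nu\doml\qshift(\beta)$ become the two ``window'' bullets of $\qamp{\alpha}{\beta}$, the merge produces the ``ascent between consecutive nonempty blocks'' bullet together with a witnessing chain $\wt(S)\doml\gamma\doml\qshift(\wt(S))$, and the marking rule forces $\sgn{S}=0$. This assignment should be injective, with inverse — given such a pair $(S,\gamma)$ — obtained by refining the blocks of $S$ according to $\gamma$ and reading $\mu$ off the barred entries and $\nu$ off the unbarred entries, mirroring the map $\phi$ of Theorem~\ref{thm:qktimesqda}.

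The remaining terms of the right side of $(\star)$ must cancel in signed pairs, which I would arrange with a $\gamma$-preserving, sign-reversing involution $\iota$ on $\{(S,\gamma):S\in\qamp{\alpha}{\beta},\ \wt(S)\doml\gamma\doml\qshift(\wt(S))\}$ whose fixed points are precisely the pairs produced by the encoding of the previous paragraph. The natural $\iota$ locates the leftmost obstruction to $(S,\gamma)$ lying in that image — typically the largest $i$ for which $\circled{i}$ sits strictly to the left of $\overline{\circled{i}}$ — and toggles which block contains the offending circled entry across one boundary, re-reading the rest of $S$ from $\gamma$; this changes the number of such crossings by one, hence $\sgn{S}$ by one, while fixing $\gamma$, and squares to the identity. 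Granting that $\iota$ is well defined and involutive, for each $\gamma$ we get $\sum_{S}(-1)^{\sgn{S}}=\#\{(S,\gamma)\text{ fixed by }\iota\}=\#\{(\mu,\nu):\mu+\nu=\gamma\}$, which is $(\star)$ and hence the theorem. The ``cancellation-free'' assertion is then a separate and easier fact — that for fixed $\alpha,\beta$ the parity of $\sgn{S}$ depends only on $\wt(S)$ — which I would read off the window and circling bullets.

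I expect the main obstacle to be precisely the well-definedness of these two maps: proving that the ``merge when there is no separating descent'' step always outputs a genuine element of $\qamp{\alpha}{\beta}$ meeting all four bullets at once (the delicate point being the interaction of the window constraints with the forced positions of $\overline{\circled{i}}$ and $\circled{i}$), and that the toggling move defining $\iota$ never violates a bullet and really pairs each non-encoding $(S,\gamma)$ with a unique partner of opposite sign. If this proves too intricate, a fallback is to substitute the M\"obius inversion of Searles' expansion $\qslide_\mu=\sum\qatom_\eta$ into the already-proven product $\qslide_\mu\cdot\qatom_\beta=\sum_{D\in\qSSet(\mu,\beta)}\qatom_{\Des(D)}$ of Theorem~\ref{thm:qktimesqda} and regroup the resulting alternating sum into the form indexed by $\qamp{\alpha}{\beta}$; but this mostly relocates the difficulty into bookkeeping over the poset of zero-placements, so I would attempt the direct involution first.
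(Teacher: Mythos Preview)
Your plan is essentially the paper's proof: expand both sides into monomials via Theorem~\ref{thm:qatom}, parametrize the monomials on the right by ordered multiset partitions $W$ obtained from $(S,\gamma)$ by refining the blocks of $S$ according to $\gamma$, and kill the unwanted $W$'s by a sign-reversing involution on the circled entries whose fixed points are exactly the encodings of the pairs $(\mu,\nu)$ on the left. The paper's involution is cleaner than your description suggests --- for the \emph{smallest} (not largest) $i$ with $\overline{\circled{i}}$ and $\circled{i}$ in different blocks of $W$, simply swap those two elements; the ordering convention $\circled{i}<\overline{\circled{i}}<i<\overline{i}$ forces them to sit at the very end of one block and the very start of the next nonempty one, so the swap preserves $\gamma$ and the weakly-decreasing-within-blocks condition, and with this concrete move the well-definedness concerns you raise largely dissolve.
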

\begin{proof}
Recall the expansion of the fundamental atoms into monomials in Theorem \ref{thm:qatom}.  Every term in the monomial expansion of $\qatom_\alpha\cdot \qatom_\beta$ can be placed in bijection with pairs of multiset partitions $(T,U)=\left((T_1,\cdots, T_k),(U_1,\cdots,U_k)\right)$  such that $T$ and $U$ respectively partition $$\bigcup_{i:\alpha_i\neq 0}\{{\overline{i}}^{(\alpha_i-1)},\overline{\circled{i}}\}\text{ and }\bigcup_{i:\beta_i\neq 0}\{{i}^{(\beta_i-1)},\circled{i}\},$$ all circled elements with value $i$ are in $T_i$ or $U_i$, and elements  $\{\overline{i}, \overline{\circled{i}}\}$ can only occur with positive multiplicity in $T_{j(\alpha,i)},T_{j(\alpha,i)+1}, \cdots, T_i$ and  $\{{i}, {\circled{i}}\}$ can only occur with positive multiplicity in $U_{j(\beta,i)},U_{j(\beta,i)+1}, \cdots, U_i$. Then summing over all such pairs $(T,U)$, we have that the left hand side is $$\sum_{(T,U)}x_1^{|T_1|+|U_1|}x_2^{|T_2|+|U_2|} \cdots x_k^{|T_k|+|U_k|}.$$  Let $V(T,U)=(V_1,V_2,\cdots,V_K)=(T_1\cup U_1,T_2\cup U_2,\cdots, T_k\cup U_k)$

To expand the right hand side in terms of monomials, start with any element $S=(S_1,\cdots,S_K)$ in $\qamp{\alpha}{\beta}$.  We create a set of new ordered multiset partition $ W(S)$ if for any $i$ such that $|S_i|>1$  we distributing all the elements of $S_i$ weakly left into   $S_{j(\wt(S),i)}\cup S_{j(\wt(S),i)+1}\cup\cdots \cup S_{i}$.  To ensure that $W(S)$ generates the monomial term expansion of $\qatom_{\wt{(S)}}$, we must fix an order to move elements to the left:  In particular, distribute elements of $S_i$ leftward in all possible ways so that \begin{enumerate}
    \item their relative order remains weakly decreasing (when we again sort elements of each particular set in decreasing order) to create a series of ordered multiset partitions and
    \item at least one element remains in $S_i$.  
\end{enumerate}  

After moving the elements left in all possible ways while preserving this order, the result is $W(S)$ and $$\qatom_{\wt{(S)}}=\sum_{W\in W(S)}x_1^{|W_1|}\cdots x_k^{|W_k|},$$ with the sign of $W$ being assigned as in $\qamp_{\alpha}{\beta}$.  Note that by construction, $\sgn{S}= \sgn{W}$ for all $W\in W(S)$.  Also note that one can reconstruct $S$ from any element $W\in W(S)$: in particular we move the elements of $W_i$ into the next nonempty set  $W_{j}$ exactly when all of the elements in $W_i$ are larger than all of the elements of $W_{j}$ .

Next, we perform a sign reversing involution on the set $$X=\cup_{S\in \qamp{\alpha}{\beta}}W(S).$$  In particular, for every $W\in X$ such that there exists an  $\overline{\circled{i}}$ and $\circled{i}$ in different sets, pick the smallest such $i$ and reverse their position.  By construction, they must always occur at the end of one set and the start of the next, so this switch can always be done without affecting the relative order of the elements within each set.  In particular, this means that when $\overline{\circled{i}}$ is left of $\circled{i}$, all of the elements in the set containing $\overline{\circled{i}}$ are larger than the elements in the set containing  $\circled{i}$ as desired for elements of $X$ when there is a weak descent between sets.  Moreover, the only fixed points are elements where any existing pairs $\overline{\circled{i}}$ and $\circled{i}$ are in the same set $W_i$.  But this is exactly the elements in the sum over monomials of the left hand side, as desired.    

\end{proof}

As is the case with the classical Demazure atoms, there are special cases where the above product is positive:
\begin{corollary}
    Let $\alpha$ be a (strong) composition of length $k$ and $\beta$ be a weak composition of length $n\geq k$.  Form $\alpha'$ from $\alpha$ by adding zeros to the end so that $\alpha'$ and $\beta$ have the same length. Then  $\qatom_\alpha\cdot \qatom_\beta$
    is a positive sum of fundamental atoms.
\end{corollary}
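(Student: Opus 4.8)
The plan is to prove that under this hypothesis every fundamental atom multiset partition in $\qamp{\alpha'}{\beta}$ has sign $0$, so that the structure-constant formula of the preceding theorem collapses to a manifestly nonnegative sum of fundamental atoms. (Appending trailing zeros to $\alpha$ changes neither $\qshift$ nor the monomial support in Theorem~\ref{thm:qatom}, so $\qatom_\alpha=\qatom_{\alpha'}$ and it is harmless to work at the common length $n$.) The first step is to compute $j(\alpha',i)$: since $\alpha$ is a strong composition, the only zero entries of $\alpha'$ lie in positions $k+1,\dots,n$, so whenever $\alpha'_i\neq 0$ we have $i\leq k$ and $\alpha'_1,\dots,\alpha'_{i-1}$ are all nonzero; hence the first nonzero entry strictly left of position $i$ is at position $i-1$ (or there is none, when $i=1$), and either way $j(\alpha',i)=i$.

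Feeding this into the defining conditions of $\qamp{\alpha'}{\beta}$: the second bullet forces the value-$i$ barred elements $\{\overline{i},\overline{\circled{i}}\}$ to occur only in $S_{j(\alpha',i)},\dots,S_i=S_i$, so in particular $\overline{\circled{i}}\in S_i$ whenever $\alpha_i\neq 0$. Now fix $i$ with $\alpha_i\neq 0$ and $\beta_i\neq 0$, so that both $\overline{\circled{i}}$ and $\circled{i}$ occur, and suppose $\circled{i}$ lies in $S_j$ with $j<i$. The third bullet then requires every \emph{other} element of value $i$ to lie in some $S_\ell$ with $\ell\leq j$; but $\overline{\circled{i}}$ is such an element and, by the previous sentence, lies in $S_i$ with $i>j$ --- a contradiction. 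So $\circled{i}$ lies weakly right of $S_i$, and since the second bullet for $\beta$ keeps it weakly left of $S_i$, in fact $\circled{i}\in S_i$, the same block as $\overline{\circled{i}}$.

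Thus for every $i$ either one of $\circled{i},\overline{\circled{i}}$ does not exist (precisely when $\beta_i=0$ or $i>k$), or both lie in $S_i$; in no case is $\circled{i}$ in a block strictly left of $\overline{\circled{i}}$. Hence $\sgn{S}=0$ for all $S\in\qamp{\alpha'}{\beta}$, and the preceding theorem yields $\qatom_{\alpha'}\cdot\qatom_\beta=\sum_{S\in\qamp{\alpha'}{\beta}}\qatom_{\wt(S)}$, a nonnegative integer combination of fundamental atoms, as claimed. I do not anticipate a genuine obstacle: the argument is just a chaining of the three location constraints after the computation of $j(\alpha',i)$. The only points needing care are the boundary case $i=1$, the clean split between ``$\beta_i=0$'' and ``$i>k$'' as the two ways a circled element can be absent, and the (convention-fixed) reading that ``every other element with value $i$'' in the third bullet includes the singleton $\overline{\circled{i}}$, since the notion of value ignores bars and circles.
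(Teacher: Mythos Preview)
The paper does not prove this corollary; it is stated as a consequence of the preceding structure-constant theorem. Your strategy of showing $\sgn{S}=0$ for every $S\in\qamp{\alpha'}{\beta}$ is the natural one, and the computation $j(\alpha',i)=i$ for all $i\le k$ is correct and is the real content of the argument.

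The gap is in your use of the third bullet. You read ``every other element with value $i$'' as including $\overline{\circled{i}}$, but notice that under that reading your contradiction no longer uses the hypothesis that $\alpha$ is strong at all: the third bullet already places $\overline{\circled{i}}$ in $S_i$ unconditionally (when both circled elements exist), so your argument would force $\circled{i}\in S_i$ and hence $\sgn{S}=0$ for \emph{every} pair $\alpha,\beta$. That is too strong: the worked example $\qatom_{003}\cdot\qatom_{003}$ immediately preceding the theorem has genuine negative terms, for instance $-(\emptyset,\ \overline{3}\,\overline{3}\,3\,3\,\circled{3},\ \overline{\circled{3}})$ with $\circled{3}\in S_2$ and $\overline{\circled{3}}\in S_3$. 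That example is only consistent with the definition if ``every other element with value $i$'' means the \emph{uncircled} value-$i$ elements. Under the intended reading your contradiction survives when $\alpha_i\ge 2$ (the uncircled $\overline{i}$'s are pinned in $S_i$ by the second bullet and $j(\alpha',i)=i$), but it evaporates when $\alpha_i=1$, since there are then no uncircled $\overline{i}$'s.

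There is a one-line repair that avoids the ambiguous clause entirely: use commutativity and apply the theorem in the other order, expanding $\qatom_{\beta}\cdot\qatom_{\alpha'}$ over $\qamp{\beta}{\alpha'}$. Now the \emph{unbarred} elements come from $\alpha'$, so the second bullet alone forces $\{i,\circled{i}\}\subseteq S_{j(\alpha',i)}\cup\cdots\cup S_i=S_i$ for each $i\le k$; since $\overline{\circled{i}}$ (if present) also lies in $S_i$ by the third bullet, $\circled{i}$ can never sit strictly left of $\overline{\circled{i}}$ and $\sgn{S}=0$ for every $S\in\qamp{\beta}{\alpha'}$.
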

In particular, the analogous statement for the product of Demazure atoms, with one term indexed by partitions was proven by Pun in her PhD. thesis \cite{pun2016decomposition}, building on previous work by Haglund, Luoto, Mason, and van Willigenburg in \cite{MR2737282}: 

\begin{corollary}
    Let $\alpha$ and $\beta$ be a weak composition of length $k$. Assume for all $i$ if $\alpha_i \cdot \beta_i\neq 0$, $ $ Then  $\atom_\alpha\cdot \atom_\beta$
    is a positive sum of fundamental atoms.
\end{corollary}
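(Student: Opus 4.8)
The plan is to deduce this from the preceding corollary on products of fundamental atoms, using Searles' positive expansion of a classical Demazure atom into fundamental atoms, $\atom_\gamma=\sum_{T\in\HSSF(\gamma)}\qatom_{\wt(T)}$. Expanding both factors and distributing,
\[
\atom_\alpha\cdot\atom_\beta=\sum_{T\in\HSSF(\alpha)}\ \sum_{U\in\HSSF(\beta)}\ \qatom_{\wt(T)}\cdot\qatom_{\wt(U)},
\]
so it is enough to show every summand $\qatom_{\wt(T)}\cdot\qatom_{\wt(U)}$ is fundamental-atom positive. By the previous corollary (and the commutativity of polynomial multiplication) this holds as soon as one of $\wt(T),\wt(U)$ is, up to trailing zeros, a strong composition --- that is, has no zero part before a nonzero one. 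Thus the whole statement reduces to the combinatorial claim that, under the stated overlap hypothesis on $\alpha$ and $\beta$, every pair $(T,U)$ with $T\in\HSSF(\alpha)$ and $U\in\HSSF(\beta)$ has at least one flat weight.

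The cleanest instance, which I would record first, is when one of the compositions --- say $\alpha$ --- is weakly decreasing (a partition after dropping trailing zeros). Then the minimal $w_\alpha$ is the identity, so $\atom_\alpha=x^{\alpha}=\qatom_\alpha$ is a single fundamental atom whose index is already flat, and the result follows at once by applying the previous corollary to each $\qatom_\alpha\cdot\qatom_{\wt(U)}$; this is the fundamental-atom shadow of Pun's theorem \cite{pun2016decomposition}. For the general hypothesis I would argue in Mason's semi-skyline augmented filling model, in which $\atom_\alpha$ is generated by fillings of the diagram of $\alpha$ subject to the row-weakly-decreasing and triple conditions: the hypothesis restricts, for each value, the columns it may occupy, and at every position where $\alpha$ and $\beta$ do not both vanish it pins the fillings so that no empty column of $\wt(T)$ (resp.\ of $\wt(U)$) precedes a nonempty one. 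Summing the resulting term-by-term positive expansions then finishes the argument.

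The main obstacle I anticipate is exactly this last combinatorial claim: converting the hypothesis on $\alpha$ and $\beta$ into flatness of the relevant $\HSSF$-weights calls for a genuine case analysis of the skyline filling rules, and it is the only place where the precise form of the hypothesis is used. Should that analysis prove unwieldy, the alternative is an induction in the spirit of the proof of Theorem~\ref{thm:qatom}: peel a single operator $\qbarpi_i$ off the fundamental-atom expansion using Lemma~\ref{lem:qatomrec}, apply the monomial description of Theorem~\ref{thm:qatom} together with the preceding corollary to the resulting smaller product, and verify that the overlap hypothesis is preserved under the move $\beta\mapsto\qs_i\beta$, mirroring the bookkeeping in Lemmas~\ref{lem:siqshift1} and~\ref{lem:siqshift2}.
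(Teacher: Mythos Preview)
The paper does not actually write out a proof here, and the printed statement is visibly garbled: the hypothesis trails off mid-sentence, and the product is written with classical atoms $\atom$ even though the entire section---including the immediately preceding corollary---treats the fundamental atoms $\qatom$. Read in context, the intended assertion is almost certainly: \emph{if $\alpha_i\beta_i=0$ for all $i$, then $\qatom_\alpha\cdot\qatom_\beta$ is a positive sum of fundamental atoms}. With that reading the proof is a one-liner from the structure-constant theorem: a negative sign in $\sum_{S\in\qamp{\alpha}{\beta}}(-1)^{\sgn{S}}\qatom_{\wt(S)}$ requires some $i$ for which both $\circled{i}$ and $\overline{\circled{i}}$ exist, i.e.\ $\alpha_i\neq0$ and $\beta_i\neq0$ simultaneously; the disjoint-support hypothesis rules this out, so $\sgn{S}=0$ for every $S$. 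This is exactly parallel to how the previous corollary falls out of the same theorem (there, $\alpha$ strong forces $j(\alpha,i)=i$, pinning $\overline{\circled{i}}$ to $S_i$ and making the third bullet of the $\qamp{\alpha}{\beta}$ definition incompatible with $\circled{i}$ lying strictly to its left).

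Your proposal instead takes the printed $\atom_\alpha\cdot\atom_\beta$ literally and tries to reduce to the previous corollary via Searles' expansion. The pivot---that for every $(T,U)\in\HSSF(\alpha)\times\HSSF(\beta)$ at least one of $\wt(T)$, $\wt(U)$ is flat---is not merely an obstacle, it is false. The index $\alpha$ itself always appears among the $\wt(T)$ (the transition from $\{\atom_\gamma\}$ to $\{\qatom_\gamma\}$ is unitriangular), and likewise $\beta$ among the $\wt(U)$; so whenever neither $\alpha$ nor $\beta$ is flat, that pair already defeats your claim. Concretely, with disjoint supports take $\alpha=(0,2,0)$ and $\beta=(2,0,1)$: here $\atom_\alpha=\qatom_{(0,2,0)}$ and $\atom_\beta=\qatom_{(2,0,1)}$, neither index is flat, and the previous corollary says nothing about their product. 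The product \emph{is} positive ($\qatom_{(3,1,1)}+\qatom_{(2,2,1)}$), but one sees this from the sign formula, not from flatness. So even granting the $\atom$ reading, the route through the previous corollary does not go through; the correct move is to argue termwise directly from the structure-constant theorem, and the relevant question becomes whether the $\HSSF$ weights of $\alpha$ and of $\beta$ themselves have disjoint supports---a different and subtler claim than the one you formulate.
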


\begin{acknowledgement}
 This work was inspired by a collaboration started by both authors at Banff International Research Center and continued at the Simons Laufer Mathematical Sciences Institute. The first author also wishes to gratefully acknowledge the Institut Mittag-Leffler for graciously hosting her during a portion of this work.
\end{acknowledgement}

\printbibliography

@article {Hiv2000,
    AUTHOR = {Hivert, Florent},
     TITLE = {Hecke algebras, difference operators, and quasi-symmetric
              functions},
   JOURNAL = {Adv. Math.},
  FJOURNAL = {Advances in Mathematics},
    VOLUME = {155},
      YEAR = {2000},
    NUMBER = {2},
     PAGES = {181--238},
      ISSN = {0001-8708},
   MRCLASS = {20C08 (05E05 33D80 39A70 81R50)},
  MRNUMBER = {1794711},
       DOI = {10.1006/aima.1999.1901},
       URL = {https://doi-org.marshall.idm.oclc.org/10.1006/aima.1999.1901},
}

@article {Mason2008,
    AUTHOR = {Mason, S.},
     TITLE = {A decomposition of {S}chur functions and an analogue of the
              {R}obinson-{S}chensted-{K}nuth algorithm},
   JOURNAL = {S\'{e}m. Lothar. Combin.},
  FJOURNAL = {S\'{e}minaire Lotharingien de Combinatoire},
    VOLUME = {57},
      YEAR = {2008},
     PAGES = {Art. B57e, 24},
      ISSN = {1286-4889},
   MRCLASS = {05E05 (05E10)},
  MRNUMBER = {2461993},
MRREVIEWER = {Grant\ Walker},
}

@article{fulton1992flags,
  title={Flags, {S}chubert polynomials, degeneracy loci, and determinantal formulas},
  author={Fulton, William},
  year={1992}
}

@article{ion2003nonsymmetric,
  title={Nonsymmetric {M}acdonald Polynomials AND {D}emazure Characters},
  author={Ion, Bogdan},
  journal={Duke Mathematical Journal},
  volume={116},
  number={2},
  year={2003}
}

@article{luoto2013introduction,
  title={An introduction to quasisymmetric {S}chur functions},
  author={Luoto, Kurt and Mykytiuk, Stefan and van Willigenburg, Stephanie},
  journal={SpringerBriefs in Mathematics},
  year={2013}
}

@book{stanley2023enumerative,
  title={Enumerative {C}ombinatorics: Volume 2},
  author={Stanley, Richard},
  year={2023},
  publisher={Cambridge University Press}
}

@article{mason2009explicit,
  title={An explicit construction of type {A} {D}emazure atoms},
  author={Mason, Sarah},
  journal={Journal of Algebraic Combinatorics},
  volume={29},
  number={3},
  pages={295--313},
  year={2009},
  publisher={Springer}
}

@article{demazure1974nouvelle,
  title={Une nouvelle formule des caracteres},
  author={Demazure, Michel},
  journal={Bull. Sci. Math},
  volume={2},
  number={98},
  pages={163--172},
  year={1974}
}

@article{assaf2017combinatorial,
  title={Combinatorial models for {S}chubert polynomials},
  author={Assaf, Sami},
  journal={arXiv preprint arXiv:1703.00088},
  year={2017}
}

@article {MR3864395,
    AUTHOR = {Assaf, Sami},
     TITLE = {Nonsymmetric {M}acdonald polynomials and a refinement of
              {K}ostka-{F}oulkes polynomials},
   JOURNAL = {Trans. Amer. Math. Soc.},
  FJOURNAL = {Transactions of the American Mathematical Society},
    VOLUME = {370},
      YEAR = {2018},
    NUMBER = {12},
     PAGES = {8777--8796},
      ISSN = {0002-9947,1088-6850},
   MRCLASS = {33D52 (05E05)},
  MRNUMBER = {3864395},
MRREVIEWER = {Sam\ Hopkins},
       DOI = {10.1090/tran/7374},
       URL = {https://doi.org/10.1090/tran/7374},
}

@article {MR2377012,
    AUTHOR = {Lam, Thomas and Pylyavskyy, Pavlo},
     TITLE = {Combinatorial {H}opf algebras and {$K$}-homology of
              {G}rassmannians},
   JOURNAL = {Int. Math. Res. Not. IMRN},
  FJOURNAL = {International Mathematics Research Notices. IMRN},
      YEAR = {2007},
    NUMBER = {24},
     PAGES = {Art. ID rnm125, 48},
      ISSN = {1073-7928,1687-0247},
   MRCLASS = {16W30 (05E05 14C35)},
  MRNUMBER = {2377012},
MRREVIEWER = {Mar\'{\i}a\ Ofelia\ Ronco},
      % DOI = {10.1093/imrn/rnm125},
       URL = {https://doi.org/10.1093/imrn/rnm125},
}

@article {MR0782239,
    AUTHOR = {Andersen, H. H.},
     TITLE = {Schubert varieties and {D}emazure's character formula},
   JOURNAL = {Invent. Math.},
  FJOURNAL = {Inventiones Mathematicae},
    VOLUME = {79},
      YEAR = {1985},
    NUMBER = {3},
     PAGES = {611--618},
      ISSN = {0020-9910,1432-1297},
   MRCLASS = {14M15 (14F05 20G05)},
  MRNUMBER = {782239},
MRREVIEWER = {S.\ Ramanan},
       DOI = {10.1007/BF01388527},
       URL = {https://doi.org/10.1007/BF01388527},
}

@article {NSYM4,
    AUTHOR = {Krob, Daniel and Thibon, Jean-Yves},
     TITLE = {Noncommutative symmetric functions. {IV}. {Q}uantum linear
              groups and {H}ecke algebras at {$q=0$}},
   JOURNAL = {J. Algebraic Combin.},
  FJOURNAL = {Journal of Algebraic Combinatorics. An International Journal},
    VOLUME = {6},
      YEAR = {1997},
    NUMBER = {4},
     PAGES = {339--376},
      ISSN = {0925-9899,1572-9192},
   MRCLASS = {05E05 (17B37 20C30)},
  MRNUMBER = {1471894},
MRREVIEWER = {Jian-yi\ Shi},
     %  DOI = {10.1023/A:1008673127310},
     %  URL = {https://doi.org/10.1023/A:1008673127310},
}

@article {MR2737282,
    AUTHOR = {Haglund, J. and Luoto, K. and Mason, S. and van Willigenburg,
              S.},
     TITLE = {Refinements of the {L}ittlewood-{R}ichardson rule},
   JOURNAL = {Trans. Amer. Math. Soc.},
  FJOURNAL = {Transactions of the American Mathematical Society},
    VOLUME = {363},
      YEAR = {2011},
    NUMBER = {3},
     PAGES = {1665--1686},
      ISSN = {0002-9947,1088-6850},
   MRCLASS = {05E05 (05E10)},
  MRNUMBER = {2737282},
MRREVIEWER = {Ira\ Gessel},
       %DOI = {10.1090/S0002-9947-2010-05244-4},
       %URL = {https://doi.org/10.1090/S0002-9947-2010-05244-4},
}

@phdthesis{pun2016decomposition,
  title={On decomposition of the product of {D}emazure atoms and {D}emazure characters},
  author={Pun, Anna Ying},
  year={2016},
  school={{U}niversity of {P}ennsylvania}
}

@article{alain1982polynomes,
  title={Polyn{\^o}mes de {S}chubert},
  author={Alain, Lascoux and Marcel-Paul, Sch{\"u}tzenberger},
  journal={CR Acad. Sci. Paris S{\'e}r. I Math},
  volume={294},
  number={13},
  pages={447--450},
  year={1982}
}

@article{lascoux1988keys,
  title={Keys \& standard bases. {I}nvariant theory and tableaux ({M}inneapolis, {MN}, 1988), 125--144},
  author={Lascoux, Alain and Sch{\"u}tzenberger, Marcel-Paul},
  journal={IMA Vol. Math. Appl},
  volume={19},
  year={1988}
}

@article{lascoux1989tableaux,
  title={{T}ableaux and noncommutative {S}chubert polynomials},
  author={Lascoux, Alain and Sch{\"u}tzenberger, Marcel-Paul},
  journal={Functional Analysis and Its Applications},
  volume={23},
  number={3},
  pages={223--225},
  year={1989},
  publisher={Springer}
}

@article {reiner1995key,
    AUTHOR = {Reiner, Victor and Shimozono, Mark},
     TITLE = {Key polynomials and a flagged {L}ittlewood-{R}ichardson rule},
   JOURNAL = {J. Combin. Theory Ser. A},
  FJOURNAL = {Journal of Combinatorial Theory. Series A},
    VOLUME = {70},
      YEAR = {1995},
    NUMBER = {1},
     PAGES = {107--143},
      ISSN = {0097-3165,1096-0899},
   MRCLASS = {05E05 (05E10 14M15 20C30)},
  MRNUMBER = {1324004},
MRREVIEWER = {Witold\ Kra\'{s}kiewicz},
    %   DOI = {10.1016/0097-3165(95)90083-7},
       URL = {https://doi.org/10.1016/0097-3165(95)90083-7},
}

@article{AS2017,
  title={Schubert polynomials, slide polynomials, {S}tanley symmetric functions and quasi-{Y}amanouchi pipe dreams},
  author={Assaf, Sami and Searles, Dominic},
  journal={Advances in Mathematics},
  volume={306},
  pages={89--122},
  year={2017},
  publisher={Elsevier}
}

@article{corteel2022compact,
  title={Compact formulas for {M}acdonald polynomials and quasisymmetric {M}acdonald polynomials},
  author={Corteel, Sylvie and Haglund, Jim and Mandelshtam, Olya and Mason, Sarah and Williams, Lauren},
  journal={Selecta Mathematica},
  volume={28},
  number={2},
  pages={32},
  year={2022},
  publisher={Springer}
}

@article {Sea2020,
    AUTHOR = {Searles, Dominic},
     TITLE = {Polynomial bases: positivity and {S}chur multiplication},
   JOURNAL = {Trans. Amer. Math. Soc.},
  FJOURNAL = {Transactions of the American Mathematical Society},
    VOLUME = {373},
      YEAR = {2020},
    NUMBER = {2},
     PAGES = {819--847},
      ISSN = {0002-9947},
   MRCLASS = {05E05 (05E10)},
  MRNUMBER = {4068251},
MRREVIEWER = {Arthur L. B. Yang},
       DOI = {10.1090/tran/7670},
       URL = {https://doi.org/10.1090/tran/7670},
}

@article {HLMvW2011,
    AUTHOR = {Haglund, J. and Luoto, K. and Mason, S. and van Willigenburg,
              S.},
     TITLE = {Quasisymmetric {S}chur functions},
   JOURNAL = {J. Combin. Theory Ser. A},
  FJOURNAL = {Journal of Combinatorial Theory. Series A},
    VOLUME = {118},
      YEAR = {2011},
    NUMBER = {2},
     PAGES = {463--490},
      ISSN = {0097-3165},
   MRCLASS = {05E05},
  MRNUMBER = {2739497},
MRREVIEWER = {Ira Gessel},
       DOI = {10.1016/j.jcta.2009.11.002},
       URL = {https://doi.org/10.1016/j.jcta.2009.11.002},
}

\end{document}